\newtheorem{theorem}{Theorem}[section]
\newtheorem{observation}{Observation}[section]
\newtheorem{proposition}[theorem]{Proposition}
\newtheorem{corollary}[theorem]{Corollary}
\newtheorem{definition}[theorem]{Definition}
\newtheorem{conjecture}[theorem]{Conjecture}
\begin{document}

\title{Tight bounds and conjectures for the isolation lemma}
\date{\today}

\author{
{\sc Vance Faber}$^{1}$
\and
{\sc David G. Harris}$^{2}$
}

\setcounter{footnote}{0}

\addtocounter{footnote}{1}
\footnotetext{IDA/Center for Computing Sciences, Bowie MD 20707. Email:\texttt{vance.faber@gmail.com}}

\addtocounter{footnote}{1}
\footnotetext{Department of Computer Science, University of Maryland, 
College Park, MD 20742. 
Research supported in part by NSF Awards CNS 1010789 and CCF 1422569.
Email: \texttt{davidgharris29@gmail.com}.}

\maketitle

\abstract{Given a hypergraph $H$ and a weight function $w: V \rightarrow \{1, \dots, M\}$ on its vertices, we say that $w$ is \emph{isolating} if there is exactly one edge of minimum weight $w(e) = \sum_{i \in e} w(i)$. The Isolation Lemma is a combinatorial principle introduced in Mulmuley et. al (1987) which gives a lower bound on the number of isolating weight functions. Mulmuley used this as the basis of a parallel algorithm for finding perfect graph matchings. It has a number of other applications to parallel algorithms and to reductions of general search problems to unique search problems (in which there are one or zero solutions).

The original bound given by Mulmuley et al. was recently improved by Ta-Shma (2015). In this paper, we show improved lower bounds on the number of isolating weight functions, and we conjecture that the extremal case is when $H$ consists of $n$ singleton edges.  We show that this conjecture holds in a number of special cases: when $H$ is a linear hypergraph or is 1-degenerate, or when $M = 2$. 

We also show that the conjecture holds asymptotically when $M \gg n \gg 1$ (the most relevant case for algorithmic applications).
}

\pagebreak

\section{Introduction}
Consider a hypergraph $H$ on $n$ vertices. We assign \emph{weights} $w$ to the vertices, which we regard as functions $w: [n] \rightarrow [M]$ (where we use the notation $[t] = \{1, \dots, t \}$). This weighting extends naturally to edges $e \in H$ by
$$
w(e) = \sum_{i \in e} w(i)
$$

We say that $e$ is a \emph{min-weight edge} (with respect to $w, H$) if for all edges $e' \in H$ we have $w(e') \geq w(e)$. Given a weight $w \in [M]^n$, we say that $w$ is \emph{isolating} (with respect to $H$) if there is exactly one min-weight edge; that is, there is an edge $e \in H$ with the property
$$
\forall e' \in H, e' \neq e \qquad w(e') > w(e)
$$

We refer to such an edge $e$ (if it exists) as \emph{isolated.}

Given any hypergraph $H$, we define
$$
Z(H, M) = \{ w \in [M]^n \mid \text{$w$ is isolating with respect to $H$} \}
$$

Our goal is to show lower bounds on the cardinality of $Z(H,M)$, which depend solely on $M$ and $n$ and are irrespective of $H$. 

Observe that when we are calculating the number of isolating weights, we may assume that $H$ is inclusion-free (i.e. there are no pair of edges $e, e' \in H, e \subsetneq e'$). We will make this assumption for the remainder of this paper. Also, by convention, if $H$ is the empty hypergraph (it contains no edges), then we say that every weight $w$ is isolating and define $Z(H,M) = [M]^n$. 

\subsection{Background}
The first lower bound on $|Z(H,M)|$, referred to as the \emph{Isolation Lemma}, was shown in \cite{mulmuley}, as the basis for a parallel algorithm to find a perfect matching in a graph. Other applications given in \cite{mulmuley} include parallel search algorithms and reduction of CLIQUE to UNIQUE-CLIQUE. The Isolation Lemma has also seen a number of uses in reducing search problems with an arbitrary number of possible solution to ``unique'' search problems (e.g. Unique-SAT), in which there is one or zero solutions. Two results in this vein which use the Isolation Lemma are reductions from NL (non-deterministic log-space) to UL (log-space with a unique solution) in \cite{wigderson, reinhardt}. In \cite{klivans-spielman}, a slightly generalized form of the Isolation Lemma was used for polynomial identity testing.

The usual algorithmic scenario can be summarized as follows. We have a hypergraph $H$ (which may not be known explicitly), which represents the space of possible solutions to some combinatorial problem. We wish to identify a unique edge $e \in H$ (a unique solution to the underlying problem). To do so, we select a random weight $w: [n] \rightarrow [M]$, where $M$ is a parameter to be chosen, and hope that $w$ has an isolated edge $e$. The probability that this occurs is $|Z(H,M)| / M^n$; thus, as long as $|Z(H,M)|$ is large compared to $M^n$, then this scheme has a good probability of succeeding in which case the overall algorithm will succeed as well. The ratio $|Z(H,M)|/M^n$ approaches $1$ as $M \rightarrow \infty$, and hence one can select $M$ sufficiently large to guarantee an arbitrarily-high success probability.

We emphasize that in such applications, typically we may \emph{choose} $M$, while the hypergraph $H$ is \emph{given} and we may have very little information about it.

The original work of \cite{mulmuley} showed a somewhat crude lower bound $|Z(H,M)| \geq M^n (1 - n/M)$. Notably, this lower bound is vacuous for $M \leq n$; however, because we may select $M$, this is not a problem algorithmically. For example,  in order to achieve $|Z(H,M)|/M^n = \Omega(1)$, we must select $M = \Omega(n)$.  In \cite{noam}, Ta-Shma improved this bound to $|Z(H,M)| \geq (M-1)^{n}$, which is strictly stronger than the bound of \cite{mulmuley}, and is non-vacuous even when $M < n$. We will review the proof by \cite{noam} in Section~\ref{bound-sec}. 

For most applications to computer science (where constant factors are irrelevant), these imprecise lower bounds on $|Z(H,M)|$ are perfectly adequate. It is nevertheless an interesting problem in extremal combinatorics to determine the tightest bound on $|Z(H,M)|$, even though this yields only minor computational savings.

We note that these algorithmic applications require a large supply of independent random bits. There has been another line of research in finding forms of the Isolation Lemma that use less randomness or can be made deterministic, such as \cite{arvind, srin, fenner, gurjar, svensson}. We do not investigate these issues in this paper.

\subsection{Overview}
In Section~\ref{bound-sec}, we discuss a generalization of the Isolation Lemma, and review the proof of \cite{noam}. We also state the main Conjecture~\ref{big-conj1} of our paper on the size of $Z(H,M)$, namely
$$
|Z(H,M)| \geq n \sum_{i=1}^{M-1} i^{n-1}
$$
for all $H$ and that this bound is tight.

We are able to show an improved bound on $|Z(H,M)|$ in Section~\ref{bound-sec}, namely
$$
|Z(H,M)| \geq 2 (M-1)^n - n \sum_{i=1}^{M-2} i^{n-1}
$$
When $M \gg n$, this is nearly optimal asymptotically, and improves significantly on the bound of \cite{noam} in all cases.

In Section~\ref{graph-transform-sec}, we show results which can be used to transfer the computation of $|Z(H,M)|$ to simpler graphs $H'$ with fewer vertices. These transformations show that Conjecture~\ref{big-conj1} holds for trees or 1-degenerate graphs. They also show that any minimal counterexample to Conjecture~\ref{big-conj1} must be connected and cannot contain vertices of degree zero or one.

In Section~\ref{m2sec}, we prove Conjecture~\ref{big-conj1} for the case $M = 2$.

In Section~\ref{linearsec}, we prove Conjecture~\ref{big-conj1} for linear hypergraphs.

In Section~\ref{sec:algorithmic}, we discuss asymptotics and algorithmic applications of these bounds.

In Section~\ref{sec:conclusion}, we conclude with some further open problems.

\section{Bounds and conjectures on $|Z(H,M)|$}
\label{bound-sec}
In nearly all application of the Isolation Lemma, the weights $w(i)$ are chosen as integers in the range $\{1, \dots, M \}$. However, the key to the Isolation Lemma is not the specific sizes of the weights, but their \emph{dynamic range.} We therefore introduce a slight generalization of the Isolation Lemma, in which we have a weight function $w: [n] \rightarrow W$, where $W \subseteq R_{>0}$ is a set of cardinality $|W| = M$.

An equivalent formulation (which will simplify some notations later on) is to have $w: [n] \rightarrow [M]$ and to specify an strictly increasing \emph{objective function} $f: [M] \rightarrow \mathbf R_{>0}$. We then define the weight of an edge by
$$
fw(e) = \sum_{i \in e} f( w(i))
$$

This type of generalized weight function is useful in some applications. For instance, \cite{narayanan} discusses a method of selecting a weight function in which there is an auxiliary function $g: [n] \rightarrow \mathbf R$ and the edge-weight is defined by $\sum_{i \in e} (w(i) + g(i))$. (This is slightly more general than allowing a single, vertex-independent, objective function). This generalized weight function will also be critical for some recursive proofs in this paper. 

We say as before that $e$ is isolated if $fw(e) < fw(e')$ for all $e' \neq e$. We may likewise define
$$
Z(H, M, f) = \{ w \in [M]^n \mid \text{$w$ is isolating with respect to $H, f$} \}
$$
When $f$ is the identity function, then $Z(H,M,f) = Z(H,M)$.

Our goal in this paper will be to show lower bounds on the cardinality of $Z(H,M,f)$, irrespective of $H, M, f$. Specifically, we define the quantity $Y(M,n)$ as
$$
Y(M,n) = \min_{H,f} |Z(H,M,f)|
$$
where $H$ ranges over all hypergraphs on $n$ vertices and $f$ ranges over all strictly increasing functions $f: [M] \rightarrow R_{>0}$.

We begin with two useful results which transform arbitrary weights into isolating weights.
\begin{proposition}[\cite{noam}]
\label{prop1}
Suppose that $w \in \{2, \dots, M\}^n$ and $e \in H$ is a min-weight edge for $fw$. Then $w - \chi_e$ is isolating for $f, H$, and $e$ is its isolated edge. 

(Here, $\chi_e$ is the characteristic function for $e$; that is, $\chi_e(v) = 1$ if $v \in e$ and $\chi_e(v) = 0$ otherwise)
\end{proposition}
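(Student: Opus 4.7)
The plan is a direct computation showing that the modification $w' := w - \chi_e$ strictly decreases $fw(e)$ more than $fw(e')$ for any competing edge $e' \neq e$, and hence breaks the tie in favor of $e$.

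First I would verify that $w'$ is a valid weight function with values in $[M]$: since $w(i) \geq 2$ everywhere and $\chi_e \in \{0,1\}^n$, the subtraction keeps each coordinate in $\{1,\dots,M\}$. Next, for each $i \in e$, define the positive quantity
$$
\delta_i = f(w(i)) - f(w(i)-1) > 0,
$$
which is strictly positive because $f$ is strictly increasing. For an arbitrary edge $e' \in H$, a straightforward bookkeeping argument gives
$$
fw'(e') = fw(e') - \sum_{i \in e \cap e'} \delta_i,
$$
since $w'$ and $w$ agree outside $e$ and differ by one on $e$.

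With this identity in hand, the inequality $fw'(e') > fw'(e)$ for $e' \neq e$ rearranges to
$$
fw(e') - fw(e) > -\sum_{i \in e \setminus e'} \delta_i.
$$
The left side is $\geq 0$ because $e$ was assumed to be a min-weight edge of $fw$, and the right side is \emph{strictly} negative provided $e \setminus e' \neq \emptyset$. This last condition is exactly where the standing assumption that $H$ is inclusion-free becomes essential: since $e \neq e'$, having $e \setminus e' = \emptyset$ would mean $e \subsetneq e'$, which is excluded. Thus the inequality is strict for every $e' \neq e$, so $e$ is the unique min-weight edge of $w'$.

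I do not expect a genuine obstacle here; the only subtle point is recognizing that the proof needs the inclusion-free hypothesis to handle the case $e \subseteq e'$ (where the two sides of the key inequality would only be $\geq 0$, not strictly $> 0$). The rest is routine algebra once the quantities $\delta_i$ are introduced.
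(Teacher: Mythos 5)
Your proof is correct and takes essentially the same route as the paper: both hinge on the observation that $e \setminus e'$ is nonempty (from the inclusion-free hypothesis) and on the strict monotonicity of $f$ applied to the coordinates in $e \setminus e'$. The paper phrases the computation via the symmetric-difference decomposition $fw'(e')-fw'(e) = \sum_{i \in e'-e} - \sum_{i \in e-e'}$, while you introduce the increments $\delta_i$ and subtract; this is a notational variant of the same argument.
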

\begin{proof}
Let $e' \in H, e' \neq e$ and let $w' = w - \chi_e$. Note that $e \cap e'$ is a \emph{strict} subset of $e$; for, if not, then this would imply $e \subsetneq e'$ which contradicts that $H$ is inclusion-free.

Then we have
\begin{align*}
fw'(e') -fw'(e) &= \sum_{ i \in e' - e} f(w'(i)) - \sum_{i \in e - e'} f(w'(i)) \\
&= \sum_{ i \in e' - e} f(w(i)) - \sum_{i \in e - e'} f(w(i) - 1) \\
&> \sum_{ i \in e' - e} f(w(i)) - \sum_{i \in e - e'} f(w(i)) \\
&\qquad \qquad \text{as $e - e' \neq \emptyset$ and $f$ is strictly increasing} \\
&= fw(e') - fw(e) \geq 0
\end{align*}
\end{proof}

Using Proposition~\ref{prop1}, Ta-Shma gave a simple lower bound on $|Z(H,M,f)|$:
\begin{proposition}[\cite{noam}] 
For all $M, H, f$ we have
$$
|Z(H,M,f)| \geq (M-1)^n
$$
\end{proposition}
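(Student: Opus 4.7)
The plan is to exhibit an injection from the hypercube $\{2, \dots, M\}^n$ into $Z(H,M,f)$, leveraging Proposition~\ref{prop1} as a black box. The empty hypergraph case is trivial (every weight is isolating by convention, so $|Z(H,M,f)| = M^n$), so I restrict to $H$ non-empty.

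For any $w \in \{2, \dots, M\}^n$, since $H$ is non-empty, $w$ admits at least one $fw$-min-weight edge; fix a deterministic tie-breaking rule (e.g.\ lexicographically smallest) and denote the resulting canonical min-weight edge by $e(w) \in H$. Define the map
$$
\Phi : \{2, \dots, M\}^n \to [M]^n, \qquad \Phi(w) = w - \chi_{e(w)}.
$$
Since $w(i) \geq 2$ for all $i$, the subtraction keeps values in $[M]$, so $\Phi$ indeed lands in $[M]^n$. By Proposition~\ref{prop1}, $\Phi(w)$ is isolating with respect to $H, f$, so $\Phi(w) \in Z(H, M, f)$.

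It remains to check that $\Phi$ is injective. Given $w' = \Phi(w)$, Proposition~\ref{prop1} asserts that the isolated edge of $w'$ is precisely $e(w)$; since the isolated edge (when it exists) is uniquely determined by $w'$, we can read off $e(w)$ from $w'$ and then recover $w = w' + \chi_{e(w)}$. Hence $\Phi$ is one-to-one, giving
$$
|Z(H,M,f)| \geq |\{2, \dots, M\}^n| = (M-1)^n.
$$

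I do not anticipate a real obstacle here: the argument is a direct corollary of Proposition~\ref{prop1} combined with the observation that isolated edges are unique. The only care needed is handling the empty-hypergraph convention and ensuring the tie-breaking rule produces a well-defined $e(w)$ so that $\Phi$ is an honest function.
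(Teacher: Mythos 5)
Your argument is correct and is essentially identical to the paper's proof: both define the injection $w \mapsto w - \chi_e$ from $\{2,\dots,M\}^n$ into $Z(H,M,f)$ using Proposition~\ref{prop1}, and both invert it by reading off the unique isolated edge. Your only additions are explicitly handling the empty-hypergraph convention and noting that the tie-breaking rule must be deterministic, which are small clarifications of the same approach.
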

\begin{proof}
We construct an injective map $\Psi$ from $\{2, \dots, M \}^n$ to $Z(H,M,f)$, as follows. Given any 
$w \in \{2, \dots, M \}^n$, arbitrarily select one min-weight edge $e$, and map $\Psi(w) = w - \chi_e$. By Proposition~\ref{prop1} the images of this map are all isolating. Also, this map is injective: given some $w \in \text{image}(\Psi)$, it has an isolated edge $e$ and its pre-image is $\Psi^{-1}(w) = w + \chi_e$.
\end{proof}

The next proposition is at the heart of our improvement over Ta-Shma's work:
\begin{proposition}
\label{prop1a}
Suppose that $w \in [M]^n$ and $e \in H$ is a min-weight edge for $f$. Suppose there is some $\ell \in [n]$ such that all min-weight edges contain $\ell$,  and that $w(i) \geq 2$ for $i \neq \ell$.

Then $w - \chi_{e - \{\ell \}}$ is isolating for $f, H$, and $e$ is its isolated edge.
\end{proposition}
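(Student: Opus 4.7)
The strategy mirrors the proof of Proposition~\ref{prop1}: set $w' = w - \chi_{e \setminus \{\ell\}}$, fix an arbitrary edge $e' \in H$ with $e' \neq e$, and show $fw'(e') > fw'(e)$. First I would verify $w' \in [M]^n$: the only coordinates changed are those in $e \setminus \{\ell\}$, where $w(i) \geq 2$, so $w'(i) = w(i) - 1 \geq 1$. Note also that $\ell \in e$, since $e$ is itself a min-weight edge.

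The main algebraic step is to expand $fw'(e') - fw'(e)$ and compare it to $fw(e') - fw(e)$. Since $w'$ agrees with $w$ except on $e \setminus \{\ell\}$, a direct computation yields
\begin{equation*}
fw'(e') - fw'(e) = \bigl( fw(e') - fw(e) \bigr) + \sum_{i \in (e \setminus \{\ell\}) \setminus e'} \bigl( f(w(i)) - f(w(i)-1) \bigr).
\end{equation*}
The first term is $\geq 0$ because $e$ is a min-weight edge, and every summand in the sum is strictly positive because $f$ is strictly increasing and $w(i) \geq 2$ for the relevant $i$.

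It then suffices to establish a case split. If $fw(e') > fw(e)$, the first term alone is strictly positive and we are done. If $fw(e') = fw(e)$, then $e'$ is a min-weight edge, so by hypothesis $\ell \in e'$; hence $(e \setminus \{\ell\}) \setminus e' = e \setminus e'$. Because $H$ is inclusion-free and $e \neq e'$, we have $e \not\subseteq e'$, so this index set is nonempty and the sum contributes strictly. Either way, $fw'(e') > fw'(e)$.

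The only subtlety is the coupling between the two hypotheses of the proposition. The condition $w(i) \geq 2$ for $i \neq \ell$ ensures $w'$ is a legal weighting, while the condition that every min-weight edge contains $\ell$ is exactly what forces the sum to remain nonempty in the degenerate case $fw(e') = fw(e)$. Neither step presents a serious obstacle; the proof is essentially a careful bookkeeping exercise in the style of Proposition~\ref{prop1}.
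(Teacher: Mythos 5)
Your proof is correct and follows essentially the same approach as the paper, comparing $fw'(e') - fw'(e)$ to $fw(e') - fw(e)$ via a nonnegative correction term governed by the strict monotonicity of $f$. The only cosmetic difference is the case split: the paper branches on whether $\ell \in e'$ or $\ell \notin e'$, while you first isolate a single unified identity and then branch on whether $fw(e') > fw(e)$ or $fw(e') = fw(e)$; both routes hinge on the same observation that strictness comes either from the base difference being positive or from $(e - \{\ell\}) - e' = e - e'$ being nonempty by inclusion-freeness once $\ell \in e'$ is forced.
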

\begin{proof}
Let $e' \in H, e' \neq e$ and let $w' = w - \chi_{e - \{\ell \}}$. There are two cases. First, suppose that $\ell \in e'$. Then
\begin{align*}
fw'(e') - fw(e) &= \sum_{ i \in e' - e} f(w'(i)) - \sum_{i \in e - e'} f(w'(i)) \\
&= \sum_{ i \in e' - e} f(w(i)) - \sum_{i \in e - e'} f(w(i) - 1) \\
&> \sum_{ i \in e' - e} f(w(i)) - \sum_{i \in e - e'} f(w(i)) = fw(e') - fw(e) \geq 0
\end{align*}

Next, suppose that $\ell \notin e'$. Then $fw(e') > fw(e)$ and so
\begin{align*}
fw'(e') - fw'(e) &= \sum_{ i \in e' - e} f(w'(i)) - \sum_{i \in e - e'} f(w'(i)) \\
&= \sum_{ i \in e' - e} f(w(i)) - f(w(\ell)) - \sum_{ i \in (e - \{ \ell \}) - e' } f(w(i)-1)  \\
&\geq \sum_{ i \in e' - e} f(w(i)) - f(w(\ell)) - \sum_{ i \in (e - \{ \ell \}) - e' } f(w(i)) \\
&= fw(e') - fw(e) > 0
\end{align*}

\end{proof}

\subsection{The conjectured extremal case: the singleton hypergraph}
We define the \emph{singleton hypergraph} $S_n$, which has vertex set $[n]$ and $n$ singleton edges $\{1 \}, \dots, \{ n \}$. We likewise define its complement graph $\bar S_n$, which has all $n$ edges of cardinality $n-1$.

\begin{observation}
For any $M,f$ we have
$$
|Z(S_n, M,f)| = |Z(\bar S_n, M, f)| = n \sum_{i=1}^{M-1} i^{n-1}
$$
\end{observation}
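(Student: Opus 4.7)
The proof is essentially a direct counting argument, with the two halves of the equality handled in parallel by a natural duality.

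For $S_n$, whose edges are the singletons $\{1\}, \ldots, \{n\}$, a weight $w$ is isolating exactly when there is a unique vertex $i$ minimizing $f(w(i))$. Since $f$ is strictly increasing, this is the same as having a unique vertex $i$ minimizing $w(i)$. The plan is to count such configurations by specifying first the isolated vertex $i$ (which can be any of the $n$ vertices), then the value $k = w(i) \in \{1,\ldots,M-1\}$ (it must leave room above), and finally the values $w(j) \in \{k+1,\ldots,M\}$ for each of the $n-1$ other vertices, contributing $(M-k)^{n-1}$ possibilities. Summing gives
\[
|Z(S_n, M, f)| = n \sum_{k=1}^{M-1} (M-k)^{n-1} = n \sum_{i=1}^{M-1} i^{n-1}
\]
after reindexing $i = M-k$.

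For $\bar{S}_n$, whose edges are $e_j = [n] \setminus \{j\}$, I would observe that
\[
fw(e_j) = \sum_{i \in [n]} f(w(i)) - f(w(j))
\]
so the min-weight edges are precisely those $e_j$ for which $f(w(j))$, equivalently $w(j)$, is maximal. Hence $w$ is isolating with respect to $\bar{S}_n$ if and only if there is a unique vertex attaining the maximum value of $w$.

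To finish, I would exhibit the bijection $w \mapsto w'$ with $w'(i) = M+1 - w(i)$, which sends weight functions on $[M]$ to weight functions on $[M]$ and swaps "unique maximum" with "unique minimum". Thus $|Z(\bar{S}_n,M,f)|$ equals the count obtained for $S_n$ above (note the count depends only on $M$ and $n$, not on $f$, since the condition reduces to the combinatorial statement about $w$ alone). No step in this argument is really an obstacle; the only thing to check carefully is that the maximum-vs-minimum symmetry passes through the choice of $f$, which it does because replacing $w$ by $M+1-w$ effectively replaces $f$ by the function $\tilde{f}(k) = -f(M+1-k)$, and the condition "unique max of $f \circ w$" becomes "unique min of $\tilde{f} \circ w'$" — so both quantities count the same purely combinatorial object.
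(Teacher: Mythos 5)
Your proof is correct and follows essentially the same approach as the paper's: both rely on the observation that a weight is isolating for $S_n$ (resp.\ $\bar S_n$) exactly when one vertex strictly attains the minimum (resp.\ maximum), then count directly. You spell out the $\bar S_n$ case via the reflection $w \mapsto M+1-w$ where the paper simply states the symmetric structural characterization, but the underlying argument is the same.
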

\begin{proof}
Any isolating weight for $S_n$ has the following form: one vertex $i$ is assigned weight $w(i) = j$, and the other vertices are assigned weights $> j$. 

Any isolating weight for $\bar S_n$ has the following form: one vertex $i$ is assigned weight $w(i) = j$, and the other vertices are assigned weights $< j$.
\end{proof}

We conjecture that this bound is tight. 
\begin{conjecture}
\label{big-conj1}
$$
Y(M,n) = n\sum_{i=0}^{M-1} i^{n-1}
$$
\end{conjecture}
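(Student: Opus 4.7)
The plan is to exhibit an injection from a canonical domain of size $n\sum_{i=1}^{M-1} i^{n-1}$ into $Z(H,M,f)$ that simultaneously generalizes the Ta-Shma map and Proposition~\ref{prop1a}. The natural domain is
$$
D = \bigl\{ (v, j, w') : v \in [n],\ j \in \{1,\ldots,M-1\},\ w' \in \{j+1,\ldots, M\}^{[n]\setminus \{v\}} \bigr\},
$$
whose cardinality equals $n \sum_{j=1}^{M-1}(M-j)^{n-1} = n\sum_{i=1}^{M-1} i^{n-1}$; note that $D$ is literally the set of isolating weights for the singleton hypergraph $S_n$, parameterized by (special vertex, its weight, weights of the rest). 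Given $(v,j,w') \in D$, form $\tilde w$ on $[n]$ by setting $\tilde w(v) = j$ and $\tilde w(u) = w'(u)$ otherwise; select a min-weight edge $e \ni v$ (breaking ties by a fixed rule); and define $\Psi(v,j,w') = \tilde w - \chi_{e \setminus \{v\}}$. By the argument of Proposition~\ref{prop1a} applied with $\ell = v$, this is isolating for $H,f$ with isolated edge $e$, provided the hypothesis that all min-weight edges of $\tilde w$ contain $v$ can be established.

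Carrying out this plan would proceed in three stages. First, I would use induction on $n$ combined with the structural reductions promised in Section~\ref{graph-transform-sec}, so that it suffices to treat hypergraphs $H$ that are connected and have minimum degree at least $2$; this guarantees that an edge $e \ni v$ exists for every $v$, making the construction well defined. Second, I would verify injectivity: the isolated edge of $\Psi(v,j,w')$ recovers $e$, and $v$ should then be recovered as the unique vertex of $e$ of smallest weight in $\Psi(v,j,w')$, provided the tie-breaking rule in the forward direction is chosen consistently with this reconstruction. Third, I would verify the structural hypothesis of Proposition~\ref{prop1a}: for the particular $\tilde w$ built from a triple in $D$, every min-weight edge must pass through $v$, which is plausible because $v$ carries the unique smallest coordinate of $\tilde w$, so any edge avoiding $v$ consists entirely of vertices of weight at least $j+1$.

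The main obstacle is precisely the third stage: for a general hypergraph, a min-weight edge of $\tilde w$ need not contain $v$ at all, because the edges through $v$ may pool many large weights from $v$'s neighbors while an edge avoiding $v$ could be short enough to achieve a smaller total. Overcoming this will likely demand a case split on whether $v$ lies in a min-weight edge, augmented by a second construction (a variant of Proposition~\ref{prop1}) for the bad case, together with a careful argument showing the two images remain disjoint. The delicacy of that disjointness is consistent with the paper's apparent need to restrict to special classes (1-degenerate, linear, $M=2$, or $M \gg n$) in the overview, and I would expect the most stubborn residual difficulty to arise when $\tilde w$ has many tied min-weight edges scattered across vertices of $H$, since then neither Proposition~\ref{prop1} nor Proposition~\ref{prop1a} provides a canonical choice of the edge to subtract.
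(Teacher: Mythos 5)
The object you are trying to prove is stated in the paper as an open \emph{conjecture}, not a theorem: the paper establishes it only for $1$-degenerate hypergraphs, linear hypergraphs, and $M=2$, and for general $H$ proves only the strictly weaker Theorem~\ref{main-thm}. So there is no paper proof to match, and the relevant question is whether your proposal closes the conjecture. It does not.

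Your plan is the natural injection in the spirit of Ta-Shma and Theorem~\ref{main-thm}: starting from the isolating weights of $S_n$, form $\tilde w$ with the unique minimum at $v$ and subtract $\chi_{e \setminus \{v\}}$ for a min-weight edge $e \ni v$, invoking Proposition~\ref{prop1a}. You correctly flag that the hypothesis of Proposition~\ref{prop1a} (all min-weight edges pass through $v$) can simply fail: a long edge through $v$ can lose to a short edge avoiding $v$ even though $v$ carries the unique smallest coordinate, so the construction is not well defined for general $H$. Falling back to Proposition~\ref{prop1} when a min-weight edge avoids $v$ does not repair this, because injectivity then breaks in two places you do not resolve. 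First, the inverse step ``recover $v$ as the unique smallest-weight vertex of the isolated edge'' is unavailable: after subtracting $\chi_{e\setminus\{v\}}$, any $u \in e\setminus\{v\}$ with $w'(u)=j+1$ drops to weight $j$ and ties with $v$, and there is no evident forward tie-breaking rule that makes the recovery of $v$ consistent across all triples. Second, the two branches of the construction can produce the same isolating weight from distinct triples $(v,j,w')$, and the disjointness you ``expect to need'' is exactly what is missing. These are not bookkeeping details; they are the crux of why the paper settles for Theorem~\ref{main-thm}, which avoids constructing an injection by instead averaging $1/\deg(u)$ over a bipartite incidence graph and consequently pays the loss term $-2(M-2)^n - n(M-2)^{n-1}$, and why Conjecture~\ref{big-conj1} is left open except in the special cases of Sections~\ref{graph-transform-sec}--\ref{linearsec}.
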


One strategy that will be useful is to categorize weights in terms of their lowest value vertex. More formally, for any weight $w$, we define the $\emph{layer}$ of $w$ to be $$
L(w) = \min_{x \in [n]} w(x).
$$
For $j = 1, \dots, M$, we define $Z_j(H, M, f)$ to be the set of isolating weights $w$ with the property that $L(w) = j$. Similarly we define a universal lower bound $Y_j(M,n)$ such that $|Z_j(H,M,f)| \geq Y_j(M,n)$. 

Observe that $Z_1(S_n, M, f) = n (M-1)^{n-1}$ for any choice of $f$. We again conjecture that this bound is tight.
\begin{conjecture}
\label{big-conj2}
$$
Y_1(M,n) = n (M-1)^{n-1}
$$
\end{conjecture}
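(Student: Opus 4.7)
The plan is to prove $|Z_1(H, M, f)| \geq n(M-1)^{n-1}$ by induction on $n$, in the spirit of the vertex-deletion reductions foreshadowed in Section~\ref{graph-transform-sec}. The base case $n=1$ is immediate: $|Z_1(H,M,f)| = 1$ whether $H$ is empty or equal to $\{\{1\}\}$, matching $1 \cdot (M-1)^{0}$.

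Two clean structural reductions settle the inductive step when $H$ admits a ``special'' vertex. First, suppose $v \in [n]$ is isolated in $H$ (contained in no edge). Then $w(v)$ does not appear in any $fw(e)$, so partitioning by $w(v)$ gives
\[
|Z_1(H, M, f)| = |Z(H', M, f)| + (M-1)\,|Z_1(H', M, f)|,
\]
where $H'$ denotes $H$ viewed as a hypergraph on $[n]-\{v\}$: the first summand counts weights with $w(v)=1$ (which alone witnesses layer~$1$) and the second counts weights with $w(v) \in \{2,\dots,M\}$ whose layer-$1$ witness must lie in $[n]-\{v\}$. Ta-Shma's bound yields $|Z(H', M, f)| \geq (M-1)^{n-1}$ and the inductive hypothesis yields $|Z_1(H', M, f)| \geq (n-1)(M-1)^{n-2}$, and these sum to exactly $n(M-1)^{n-1}$. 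Symmetrically, if $v$ lies in \emph{every} edge, set $H' = \{e - \{v\} : e \in H\}$ on $[n] - \{v\}$; this is still inclusion-free, and since $f(w(v))$ is added uniformly to every edge weight the isolating status of $w$ is determined by $w|_{[n]-\{v\}}$ alone, so the same identity and bound hold.

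The remaining case, when every vertex of $H$ lies in some but not all edges, is where the main obstacle sits. A natural attempt is to build an injection $\Phi: [n] \times \{2,\dots,M\}^{n-1} \to Z_1(H,M,f)$ using Proposition~\ref{prop1a}: given $(v, \tilde w)$, set $w(v)=1$ and $w(i)=\tilde w(i) \geq 2$ for $i \neq v$; if every min-weight edge of $w$ contains $v$, apply the proposition with $\ell=v$ to obtain an isolating weight whose $v$-coordinate remains $1$, preserving layer~$1$. The sticking point is the inputs $(v, \tilde w)$ for which some min-weight edge avoids $v$ (unavoidable when $H$ has no universal vertex): for these one needs an alternative construction, perhaps an application of Proposition~\ref{prop1} to the perturbed weight $w'$ with $w'(v)=2$ that subtracts along a min-weight edge containing $v$ and yields a $v$-coordinate of $1$ in the output. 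Coordinating the two constructions into a globally injective map requires a careful tie-breaking rule on the inverse, and I expect this coordination---rather than any single local step---to be the main difficulty. Given that the authors state this only as a conjecture, and that Section~\ref{graph-transform-sec} has to do genuine work to reduce even Conjecture~\ref{big-conj1} to connected min-degree-$2$ hypergraphs, a complete proof likely requires analogous structural transformations beyond the purely local swap sketched above.
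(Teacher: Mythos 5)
The statement you are trying to prove is stated in the paper only as a conjecture, not a theorem: the paper establishes it in special cases ($M=2$ in Section~\ref{m2sec}, linear hypergraphs in Section~\ref{linearsec}, $1$-degenerate hypergraphs via Propositions~\ref{induct-prop1} and~\ref{induct-prop2}), but the general case is explicitly open. So there is no complete proof in the paper for your attempt to match, and no complete proof is currently known.

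Your reductions for the two ``special vertex'' cases are correct. The degree-zero reduction is essentially Proposition~\ref{induct-prop1} re-derived as an exact identity, combined with Ta-Shma's $(M-1)^{n-1}$ bound and the inductive hypothesis; the arithmetic $((M-1)^{n-1} + (M-1)(n-1)(M-1)^{n-2} = n(M-1)^{n-1})$ checks out. The universal-vertex reduction is a dual observation that the paper does not spell out but is valid: $H' = \{\,e - \{v\} : e \in H\,\}$ stays inclusion-free precisely because $v$ lies in every edge, and the degenerate case $H = \{\{v\}\}$ (which forces $H' = \{\emptyset\}$) still satisfies the required lower bounds since then every weight is isolating. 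These are nice clean steps.

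The genuine gap is exactly where you flag it: the case in which every vertex is in some but not all edges. Your proposed map $\Phi(v,\tilde w) = w - \chi_{e - \{v\}}$ (with $w(v)=1$ and $w|_{[n]\setminus\{v\}}=\tilde w$, invoking Proposition~\ref{prop1a}) is \emph{not} injective on its own: the output can have several coordinates equal to $1$ whenever $\tilde w(i)=2$ for some $i$ in the chosen min-weight edge $e$, and each such output can arise from up to $|e|$ distinct inputs $(v',\cdot)$. The paper's Theorem~\ref{main-thm} takes precisely this construction and, instead of claiming injectivity, quantifies the multiplicity via a bipartite double-counting argument --- and the result is the strictly weaker bound $2(M-1)^n - 2(M-2)^n - n(M-2)^{n-1}$, not $n(M-1)^{n-1}$. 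Your fallback using Proposition~\ref{prop1} on $w'$ with $w'(v)=2$ also does not obviously produce a weight whose $v$-coordinate is $1$, since the min-weight edges of $w'$ need not contain $v$. In short, you have correctly identified both the structure of the easy cases and the essential obstruction, and you are right that the coordination/injectivity issue is the crux; but the hard case remains unresolved, as the authors themselves leave it.
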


Although Conjecture~\ref{big-conj2} involves only $Y_1$, it implies bounds for all $Y_2, \dots, Y_M$.

\begin{proposition}
\label{prop2}
For all $M, n, j$ we have $Y_j(M,n) = Y_1(M-j+1,n)$.
\end{proposition}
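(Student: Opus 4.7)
The plan is to establish a weight-preserving bijection between $Z_j(H, M, f)$ and $Z_1(H, M - j + 1, f')$ for a suitably reparametrized objective $f'$, and then pass to the minimum over $(H, f)$ on both sides.

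First, I would define the shift map $\phi : \{j, j+1, \dots, M\}^n \to \{1, 2, \dots, M-j+1\}^n$ by $\phi(w)(i) = w(i) - (j-1)$. Observe that $L(w) = j$ if and only if $L(\phi(w)) = 1$, and $\phi$ is clearly a bijection between weights on $[n]$ taking values in $\{j, \dots, M\}$ with minimum exactly $j$ and weights on $[n]$ taking values in $[M-j+1]$ with minimum exactly $1$. Given $f : [M] \to \mathbf{R}_{>0}$ strictly increasing, define $f' : [M-j+1] \to \mathbf{R}_{>0}$ by $f'(k) = f(k + j - 1)$; this is strictly increasing and positive. Then for every edge $e$,
\[
f' \phi(w)(e) = \sum_{i \in e} f'(w(i) - j + 1) = \sum_{i \in e} f(w(i)) = fw(e),
\]
so the total ordering of edge weights is preserved by $\phi$. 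Consequently $w$ is isolating for $(H, f)$ with isolated edge $e$ if and only if $\phi(w)$ is isolating for $(H, f')$ with the same isolated edge $e$. Restricting to weights of layer exactly $j$ on the left and layer exactly $1$ on the right, $\phi$ is a bijection $Z_j(H, M, f) \to Z_1(H, M-j+1, f')$.

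Next I would use this to pass between the two minima. For the inequality $Y_j(M, n) \ge Y_1(M-j+1, n)$: every pair $(H, f)$ on $[M]$ yields a pair $(H, f')$ on $[M-j+1]$ with $|Z_j(H, M, f)| = |Z_1(H, M-j+1, f')| \ge Y_1(M-j+1, n)$, so taking the minimum over $(H, f)$ gives the bound. For the reverse inequality, I would show that any $(H, f')$ on $[M-j+1]$ extends: pick any strictly increasing $f : [M] \to \mathbf{R}_{>0}$ agreeing with the shift, e.g. choose arbitrary values $f(1) < f(2) < \dots < f(j-1) < f'(1)$ on the prefix and set $f(k) = f'(k - j + 1)$ for $k \ge j$. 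Then by the bijection, $|Z_1(H, M-j+1, f')| = |Z_j(H, M, f)| \ge Y_j(M, n)$, and minimizing over $(H, f')$ gives $Y_1(M-j+1, n) \ge Y_j(M, n)$.

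The argument is essentially routine: the only thing worth checking carefully is that the correspondence $f \leftrightarrow f'$ between strictly increasing positive functions on $[M]$ and on $[M-j+1]$ is surjective in both directions when combined with the bijection on weights, so that the minimum on each side sees exactly the same family of values. Since nothing in the definition of isolating depends on the absolute magnitudes of $f$ values (only their ordering, through the sums), and any strictly increasing positive function on $[M-j+1]$ can be realized as $f'(k) = f(k + j - 1)$ for some strictly increasing positive $f$ on $[M]$, this surjectivity is immediate, and the equality $Y_j(M, n) = Y_1(M - j + 1, n)$ follows.
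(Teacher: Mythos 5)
Your proof is correct and takes essentially the same approach as the paper: a shift bijection between layer-$j$ weights on $[M]$ and layer-$1$ weights on $[M-j+1]$, with the objective function reparametrized by $f'(k) = f(k+j-1)$, and the two inequalities obtained by composing the bijection with minimization over $(H,f)$. The only cosmetic difference is that the paper makes the extension $f' \mapsto f$ explicit (using a small $\alpha > 0$ with $g(k) = \alpha k$ for $k < j$), while you simply note that suitable prefix values exist; both are fine.
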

\begin{proof}
For any $M,j$ define $W_{M,j}$ to be the set of weights $w \in [M]^n$ with $L(w) = j$.

Suppose $H,f$ satisfies $|Z_j(H,M,f)| = Y_j(M,n)$. Define the function $g:[M-j+1] \rightarrow \mathbf R_{>0}$ by $g(k) = f(k+j-1)$. For $w \in W_{M-j+1,j}$ we have $gw(e) = fw'(e)$ for all edges $e$, where $w' = w + (j-1)$. Also, note that the function mapping $w$ to $w' = w + (j+1)$ is a bijection from $W_{M - j + 1, 1}$ to $W_{M,j}$. So $|Z_j(H,M,f)| = |Z_1(H,M - j + 1,g)|$ and hence $Y_1(M - j + 1) \geq Y_j(M,n)$.

Suppose $H,f$ satisfies $|Z_1(H,M -j + 1,f)| = Y_1(M - j + 1,n)$. Since the image of $f$ is strictly positive, there is a real number $\alpha > 0$ such that $\alpha M < f(1)$. Define the function $g:[M] \rightarrow \mathbf R_{> 0}$ by
$$
g(k) = \begin{cases}
\alpha k & k < j \\
f(k - j + 1) & k \geq j
\end{cases}
$$

This function is strictly increasing and positive. For $w \in W_{M,j}$, note that $g w(e) = f w'(e)$ for all edges $e$, where $w' = w- j+1 $. Also, note that function mapping $w$ to $w' = w - j+1$ is a bijection from $W_{M,j}$ to $W_{M-j+1,1}$. So $|Z_1(H,M - j + 1,f)| = |Z_j(H,M,g)|$ and hence $Y_j(M,n) \geq Y_1(M+j+1,n)$.
\end{proof}

\begin{corollary}
\label{scor1}
We have $Y(M,n) \geq \sum_{i=1}^M Y_1(i,n)$.
\end{corollary}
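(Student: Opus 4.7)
The plan is to partition the set of isolating weights by their layer and then apply Proposition~\ref{prop2} termwise. Observe that every weight $w \in [M]^n$ has a well-defined layer $L(w) = \min_{x \in [n]} w(x) \in [M]$, so the sets $Z_1(H,M,f), \dots, Z_M(H,M,f)$ partition $Z(H,M,f)$. Hence, for any hypergraph $H$ on $n$ vertices and any strictly increasing $f:[M]\to\mathbf{R}_{>0}$,
\[
|Z(H,M,f)| \;=\; \sum_{j=1}^{M} |Z_j(H,M,f)| \;\geq\; \sum_{j=1}^{M} Y_j(M,n),
\]
where the inequality follows from the very definition of the universal lower bound $Y_j(M,n)$.

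Next I would apply Proposition~\ref{prop2}, which gives $Y_j(M,n) = Y_1(M - j + 1, n)$, and reindex the sum by setting $i = M - j + 1$ (so as $j$ runs over $1,\dots,M$, $i$ runs over $M, M-1, \dots, 1$). This yields
\[
\sum_{j=1}^{M} Y_j(M,n) \;=\; \sum_{j=1}^{M} Y_1(M-j+1,n) \;=\; \sum_{i=1}^{M} Y_1(i,n).
\]
Combining the two displays gives $|Z(H,M,f)| \geq \sum_{i=1}^M Y_1(i,n)$. Since this bound is independent of $H$ and $f$, taking the infimum of the left-hand side over all such choices gives $Y(M,n) \geq \sum_{i=1}^M Y_1(i,n)$, as required. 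There is no real obstacle here: the corollary is essentially just the combination of the layer decomposition with the layer-shifting bijection already established in Proposition~\ref{prop2}, and the only care needed is to verify that the layers indeed form a disjoint cover of $[M]^n$ and to correctly reindex.
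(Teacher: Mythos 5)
Your proof is correct and follows essentially the same approach as the paper: partition $Z(H,M,f)$ by layer, bound each $|Z_j(H,M,f)|$ below by $Y_j(M,n)$, then apply Proposition~\ref{prop2} and reindex. The only difference is that you spell out the final reindexing step and the passage to the infimum over $H,f$ a bit more explicitly than the paper does.
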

\begin{proof}
Let $H, M, f$ be given. Then Proposition~\ref{prop2} gives
\begin{align*}
|Z(H, M, f)| = \sum_{j=1}^M |Z_j(H,M,f)| \geq \sum_{j=1}^M Y_j(M,n) = \sum_{j=1}^M Y_1(M-j+1,n).
\end{align*}
\end{proof}

\begin{corollary}
\label{conjprops}
Conjecture~\ref{big-conj2} implies Conjecture~\ref{big-conj1}.
\end{corollary}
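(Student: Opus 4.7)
The plan is to chain the lower bound already established in Corollary~\ref{scor1} with the hypothesized equality from Conjecture~\ref{big-conj2}, and to match it against the upper bound given by the singleton hypergraph.

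First, I would assume Conjecture~\ref{big-conj2}, which gives $Y_1(i,n) = n(i-1)^{n-1}$ for every $i \in \{1,\dots,M\}$. Substituting into the inequality of Corollary~\ref{scor1} and shifting the summation index by $j = i-1$ yields
$$
Y(M,n) \;\geq\; \sum_{i=1}^{M} Y_1(i,n) \;=\; \sum_{i=1}^{M} n (i-1)^{n-1} \;=\; n \sum_{j=0}^{M-1} j^{n-1},
$$
which is the lower bound claimed by Conjecture~\ref{big-conj1}.

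For the matching upper bound, I would invoke the Observation, which computes $|Z(S_n, M, f)| = n\sum_{i=1}^{M-1} i^{n-1}$ for the singleton hypergraph $S_n$ and for any $f$. Since $Y(M,n)$ is defined as an infimum over $H$ and $f$, this immediately gives $Y(M,n) \leq n\sum_{i=1}^{M-1} i^{n-1}$. For $n \geq 2$ the term $i = 0$ contributes $0^{n-1} = 0$, so $\sum_{i=1}^{M-1} i^{n-1} = \sum_{i=0}^{M-1} i^{n-1}$, matching the lower bound. The degenerate case $n=1$ is trivial (the only inclusion-free hypergraphs are the empty one and $S_1$, and both give $|Z(H,M,f)| = M$, matching the conjectured value).

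There is essentially no obstacle here: the corollary is a direct consequence of Corollary~\ref{scor1} combined with the singleton computation, once $Y_1$ is pinned down by Conjecture~\ref{big-conj2}. The genuine content of the paper lies in proving Conjecture~\ref{big-conj2}, not in this deduction.
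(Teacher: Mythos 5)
Your proof is correct and follows the same route the paper implicitly intends: the corollary is stated in the paper without a written proof, immediately after Corollary~\ref{scor1}, and the intended deduction is exactly what you wrote — substitute the conjectured value of $Y_1$ into Corollary~\ref{scor1} for the lower bound, and use the singleton-hypergraph computation from the Observation for the matching upper bound. Your care with the $n=1$ edge case (where $0^{n-1}=1$ rather than $0$, and the Observation's formula needs a separate check) is a nice touch the paper glosses over.
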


We note that even if one is only interested in bounding $|Z(H,M)|$ (i.e. the case $f = \text{identity}$),  Proposition~\ref{prop2} requires bounds on $|Z(H,M',f')|$ for $f' \neq \text{identity}$. This is the main reason we need to consider the generalized $Z(H,M,f)$, instead of the simpler $Z(H,M)$, in this paper.

\subsection{An improved bound on $Y_1(M,n)$}

Although we cannot show Conjecture~\ref{big-conj2} in general, in Theorem~\ref{main-thm} we show a new lower bound on $Y_1(M,n)$, which can be significantly larger than the estimate of \cite{noam}. In the case in which $M \gg n$, the estimate provided by Theorem~\ref{main-thm} is asymptotically nearly optimal.

\begin{theorem}
\label{main-thm}
$$
Y_1(M,n) \geq 2 (M-1)^n - 2 (M-2)^n - n (M-2)^{n-1}
$$
\end{theorem}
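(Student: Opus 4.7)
I plan to prove the bound by constructing two injective maps into $Z_1(H,M,f)$ with disjoint images, whose combined domains realize the target $2(M-1)^n - 2(M-2)^n - n(M-2)^{n-1}$. Writing $k(w) := |\{i : w(i) = 2\}|$ for the number of $2$-vertices of $w \in \{2, \ldots, M\}^n$, I would set $D_1 = \{w : k(w) \geq 1\}$ of size $(M-1)^n - (M-2)^n$ and $D_2 = \{w : k(w) \geq 2\}$ of size $(M-1)^n - (M-2)^n - n(M-2)^{n-1}$; these sum to the target exactly, since we are contributing $\min(k(w), 2)$ isolating layer-$1$ weights per $w$ via the identity $|D_1| + |D_2| = \sum_{w} \min(k(w),2)$.

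For $w \in D_i$ I would pick $\ell_i = \ell_i(w)$, the $i$-th smallest-indexed $2$-vertex, and form $w^{[i]} := w - \chi_{\{\ell_i\}}$, which lies in $[M]^n$ with $w^{[i]}(\ell_i) = 1$ and $w^{[i]}(j) \geq 2$ for $j \neq \ell_i$. If $w^{[i]}$ is already isolating under $f$, I take $\Psi_i(w) := w^{[i]}$; otherwise I apply Proposition~\ref{prop1a} with $\ell = \ell_i$ and a chosen min-weight edge $e$ of $w^{[i]}$, producing $\Psi_i(w) := w^{[i]} - \chi_{e - \{\ell_i\}}$. In either case the $\ell_i$-coordinate of the output is $1$, so $\Psi_i(w) \in Z_1(H,M,f)$. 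The isolated edge of the output together with the distinguished vertex $\ell_i$ recovers $w$, giving injectivity of each $\Psi_i$; disjointness of the two images would be enforced by recording whether the distinguished $2$-vertex had rank $1$ or $2$ in the original $w$ (for example, using the smallest versus second-smallest $1$-vertex of the output).

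The main obstacle is the residual case where $w^{[i]}$ is not isolating \emph{and} some min-weight edge of $w^{[i]}$ avoids $\ell_i$, so that Proposition~\ref{prop1a} is unavailable with this choice of $\ell$. A short computation comparing $fw^{[i]}$ to $fw$ (the former differs from the latter by exactly $f(2) - f(1)$ on edges through $\ell_i$) shows that this bad case arises only when a min-weight edge of $w$ itself avoids $\ell_i$ and is tied, within the gap $f(2) - f(1)$, with an edge containing $\ell_i$. For such $w$ I would fall back on decrementing along the avoiding min-weight edge of $w^{[i]}$ and verifying isolation directly by a case analysis in the spirit of Proposition~\ref{prop1a}'s proof. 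The hardest part will be ensuring that some construction always succeeds in these tied cases and that the resulting outputs remain pairwise distinct across $\Psi_1$ and $\Psi_2$; this is where I expect the bulk of the bookkeeping, and probably also the reason the extra $-n(M-2)^{n-1}$ term appears in the bound rather than the conjectured $n(M-1)^{n-1}$.
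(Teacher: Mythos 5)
Your domain accounting is exactly right: writing $k(w)$ for the number of $2$-coordinates of $w \in \{2,\dots,M\}^n$, the sum $\sum_w \min(k(w),2) = |D_1| + |D_2|$ equals the target $2(M-1)^n - 2(M-2)^n - n(M-2)^{n-1}$, and this is in fact the same total the paper obtains (the paper's per-weight fraction $\min\bigl(1, \tfrac{2}{j+1}\bigr)$ over $X$ re-parameterizes to $\min(k(w'),2)$ over $w' = w + \chi_{\{\ell\}}$). You also correctly see that Proposition~\ref{prop1a} is the engine. But the proposal to realize the count via two genuinely injective maps with disjoint images has a concrete flaw. Consider $w$ with $w(1)=w(2)=2$, $w(j)\geq 3$ otherwise, and $H$ containing edges $e_a=\{1,2,3\}$, $e_b=\{1,2,4\}$, with these tied as min-weight in both $w^{[1]}$ and $w^{[2]}$. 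Then $\Psi_1$ and $\Psi_2$ both land in the ``apply Prop.~\ref{prop1a}'' branch with the same candidate edge $e_a$, and $\Psi_1(w) = w^{[1]} - \chi_{e_a - \{1\}} = w - \chi_{e_a} = w^{[2]} - \chi_{e_a - \{2\}} = \Psi_2(w)$. Your ``smallest vs.\ second-smallest $1$-vertex of the output'' device doesn't help here --- the outputs literally coincide as elements of $Z_1(H,M,f)$, so the images overlap regardless of any auxiliary labeling. You could try to coordinate the edge choices between $\Psi_1$ and $\Psi_2$ (pick $e_a$ for one and $e_b$ for the other), but then showing injectivity of each $\Psi_i$ separately becomes delicate (the output no longer determines the chosen edge in a canonical way, and the ``$w^{[i]}$ already isolating'' branch introduces yet another potential collision, since $w - \chi_{\{\ell_i\}}$ and some $w' - \chi_{e'}$ can agree). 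This is precisely why the paper abandons strict injectivity in favor of a bipartite graph with the fractional identity $\#\{\text{right-nodes with a neighbor}\} = \sum_{(w,u)} 1/\deg(u)$: the right-node $w - \chi_e$ is allowed to have several left-neighbors, and the accounting charges each of them $1/\deg$ rather than demanding distinct targets.

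Two smaller comments. Your ``residual case'' (some min-weight edge of $w^{[i]}$ avoids $\ell_i$) is actually the easy case, not the hard one: Proposition~\ref{prop1}'s proof only needs $w(j)\geq 2$ for $j$ in the edge being decremented, which holds since the edge avoids the unique $1$-coordinate $\ell_i$ --- this is the paper's case (A1), where the resulting right-node has degree exactly one. And your suspicion that the $-n(M-2)^{n-1}$ term comes from those tied/collision cases is on the right track, but the mechanism in the paper is different: the loss comes from right-nodes whose degree exceeds $2$ (when a min-weight edge contains more than two $2$-coordinates of $w$), so that each left-neighbor receives credit strictly less than $1/2$ per edge.
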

\begin{proof}
Let $H,f$ be given. Let $X \subseteq [M]^n$ denote the set of weights such that $w(v) = 1$ for exactly one vertex $v$. We will define a bipartite graph $G$, whose left half corresponds to $X$ and whose right half corresponds to $Z_1(H,M,f)$. To avoid confusion between $G$ and $H$, we will refer to the vertices of $G$ as ``nodes.''

Suppose we are given a node $w \in X$ with $w(i) = 1$. We construct edges from $w$ according to three cases:
\begin{enumerate}
\item[(A1)] If $w$ has at least one min-weight edge $e$ such that $i \notin e$, then create an edge from the left-node labeled $w$ to the right-node labeled $w - \chi_e$. As $i \notin e$, note that $w - \chi_e \in [M]^n$.

\item[(A2)] Suppose that $i \in e$ for all min-weight edges $e \in H$. If $w$ is already isolating for $H$, then create an edge from the left-node labeled $w$ to the right-nodes labeled $w, w - \chi_{e - \{i \}}$.

\item[(A3)] Otherwise, suppose that $i \in e$ for all min-weight edges $e$, and there are at least two such edges $e_1, e_2$. Then create edges from the left-node labeled $w$ to the two right-nodes $w - \chi_{e_1 - \{ i \}}$ and $w - \chi_{e_2 - \{i \}}$.
\end{enumerate}

In case (A1), Proposition~\ref{prop1} ensures that the corresponding right-node is isolating. In cases (A2) and (A3), Proposition~\ref{prop1a} ensures that the corresponding right-nodes are isolating.  So all the right-nodes of $G$ with at least one neighbor are isolating. We count such nodes using the following simple identity:
$$
\# \text{right-nodes $u$ with a neighbor} = \sum_{\text{edges $(w,u)$ of $G$}} 1/\text{deg}(u)
$$
For any $w \in X$, we define $R(w)$ as
$$
R(w) = \sum_{\text{edges $(w,u)$ of $G$}} 1/\text{deg}(u).
$$

Thus, we aim to show a lower bound on $\sum_{w \in X} R(w)$.

First, suppose that $w$ falls into case (A1).  Then the resulting right-node $u = w - \chi_{e}$ has exactly one vertex $i$ such that $u(i) = 1$ and $i$ is not contained in the min-weight edge $e$. Thus, $w$ is the sole neighbor of $u$ and $R(w) = 1$.

Next, suppose that $w$ falls into case (A2) or case (A3), and $|w^{-1}(2)| = j$. There are two neighbors of $w$; let us consider one such node $x$, with min-weight edge $e$. 

Let $I$ denote the set of entries $i$ such that $x(i) = 1$. It must be that $I \subseteq e$. If $I = \{ i \}$, then $x$ has at most two neighbors $x$ and $x + \chi_{e - \{i \}}$. 

On the other hand, if $|I| = \{v_1, \dots, v_k \}$ for $k >1$, then $x$ has at most $k$ neighbors $x + \chi_{e - v_1}, \dots, x + \chi_{e - v_k}$. Also observe that $k \leq j+1$. 

Thus, in either case (A2) or (A3), we see that $w$ has two neighbors, and each of these neighbors has degree at most $\max(2, j+1)$. So $R(w) \geq \min(1, \frac{2}{j+1})$. This is also true for case (A1).

Putting all these cases together and summing over $w$:
\begin{align*}
\sum_{w \in X} R(w) &\geq \sum_{w \in X} \min(1, \frac{2}{|w^{-1}(2)| + 1}) = n (M-2)^{n-1} + \frac{2}{j+1} \sum_{j=1}^{n-1} n \binom{n-1}{j} (M-2)^{n-1-j} \\
&= 2 (M-1)^n - 2 (M-2)^n - n (M-2)^{n-1}
\end{align*}
\end{proof}

\begin{corollary}
$$
Y(M,n) \geq 2 (M-1)^n - n \sum_{i=1}^{M-2} i^{n-1}
$$

\end{corollary}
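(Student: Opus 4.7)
The plan is to combine Corollary~\ref{scor1} with Theorem~\ref{main-thm} directly: Corollary~\ref{scor1} gives $Y(M,n) \geq \sum_{i=1}^{M} Y_1(i,n)$, and Theorem~\ref{main-thm} gives a bound on each $Y_1(i,n)$. So the whole argument reduces to evaluating the resulting sum.

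First I would drop the $i=1$ term using only the trivial bound $Y_1(1,n) \geq 0$ (Theorem~\ref{main-thm} is stated for $M \geq 2$, and at $i=1$ the expression on its right-hand side involves $0^n$ and $(-1)^n$ which are either zero or negative, so this term contributes nothing). For $i \geq 2$, substituting Theorem~\ref{main-thm} gives
\[
Y(M,n) \;\geq\; \sum_{i=2}^{M}\bigl[\,2(i-1)^n - 2(i-2)^n - n(i-2)^{n-1}\,\bigr].
\]
The first two terms form a telescoping sum $2\sum_{i=2}^{M}\bigl[(i-1)^n - (i-2)^n\bigr] = 2\bigl[(M-1)^n - 0^n\bigr] = 2(M-1)^n$. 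The remaining term is $n \sum_{i=2}^{M}(i-2)^{n-1} = n \sum_{j=0}^{M-2} j^{n-1} = n \sum_{j=1}^{M-2} j^{n-1}$, where in the last step the $j=0$ contribution vanishes (assuming $n \geq 2$; the small cases $n=1$ or $M\leq 2$ can be verified by hand).

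Putting these together yields exactly $Y(M,n) \geq 2(M-1)^n - n \sum_{i=1}^{M-2} i^{n-1}$, as required. There is no real obstacle here beyond the bookkeeping: the telescoping is the only nontrivial step, and the only care needed is in handling the boundary term $i=1$ (where Theorem~\ref{main-thm} is vacuous or worse) and the convention $0^{n-1} = 0$ for $n \geq 2$.
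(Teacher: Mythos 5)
Your proposal matches the paper's proof exactly: combine Corollary~\ref{scor1} with Theorem~\ref{main-thm}, drop the vacuous $j=1$ term, and telescope. The extra care you take with the boundary conventions ($0^n$, $0^{n-1}$) is correct but not spelled out in the paper; the substance is the same.
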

\begin{proof}
We have $Y(M,n) \geq \sum_{j=1}^{M}  Y_1(j,n)  \geq \sum_{j=2}^{M} 2 (j-1)^n - 2 (j-2)^n - n (j-2)^{n-1}$. This telescopes to $2 (M-1)^n - n \sum_{i=1}^{M-2} i^{n-1}$.
\end{proof}

A slight modification of Theorem~\ref{main-thm} can be used when we have an upper bound on the size of an edge of $H$.
\begin{proposition}
Suppose that all the edges in $H$ have cardinality at most $r$, where $r \geq 2$. Then 
$$
|Z_1(H,M,f)| \geq (2/r) n (M-1)^{n-1}
$$
\end{proposition}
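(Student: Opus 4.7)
The plan is to adapt the bipartite graph argument from the proof of Theorem~\ref{main-thm} with no structural change, only a tighter degree bound on right-nodes exploiting $|e| \leq r$. Concretely, I would reuse the same bipartite graph $G$ with left nodes $X = \{w \in [M]^n : w \text{ has exactly one vertex of weight } 1\}$ and right nodes $Z_1(H,M,f)$, defining edges out of each $w \in X$ by the same cases (A1), (A2), (A3). Propositions~\ref{prop1} and~\ref{prop1a} again guarantee that every right-node with at least one neighbor is isolating, so $|Z_1(H,M,f)| \geq \sum_{w \in X} R(w)$ where $R(w) = \sum_{(w,u) \in G} 1/\deg(u)$ as before.

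The next step is to sharpen the degree bound on right-nodes using the hypothesis $|e| \leq r$. In case (A1), I would reuse verbatim the argument that the right-node $u = w - \chi_e$ is reached only from $w$, so $R(w) = 1 \geq 2/r$. In cases (A2) and (A3), for a right-node $x$ with associated min-weight edge $e$, the original analysis shows $I := x^{-1}(1) \subseteq e$ and $\deg(x) \leq \max(2, |I|)$. The new ingredient is simply that $|I| \leq |e| \leq r$, so under $r \geq 2$ we get $\deg(x) \leq r$ and thus $R(w) \geq 2/r$. Combining these, every $w \in X$ satisfies $R(w) \geq 2/r$, and summing over the $|X| = n(M-1)^{n-1}$ left-nodes yields $|Z_1(H,M,f)| \geq (2/r)\, n (M-1)^{n-1}$.

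I do not anticipate a real obstacle: the entire proof is a drop-in refinement of the degree-counting step of Theorem~\ref{main-thm}, replacing $\deg(x) \leq j+1$ (with $j = |w^{-1}(2)|$) by the uniform bound $\deg(x) \leq r$, after which the binomial summation of the original proof collapses to a single term $(2/r)|X|$. The only points worth double-checking are that the containment $I \subseteq e$ in (A2)/(A3) is correctly invoked (it is immediate from those definitions, since $w(j) \geq 2$ for $j \neq i$ forces every $1$-vertex of $x$ other than the base vertex $i$ to lie in $e \setminus \{i\}$, while $i$ lies in $e$ by the (A2)/(A3) hypothesis) and that the case (A1) conclusion $R(w) = 1$ carries over unchanged, which it does since that argument is independent of the edge-size bound.
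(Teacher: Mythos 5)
Your proof is correct and follows essentially the same approach as the paper: reuse the bipartite graph of Theorem~\ref{main-thm}, observe that $I \subseteq e$ forces $\deg(x) \leq \max(2,|I|) \leq r$, conclude $R(w) \geq 2/r$, and sum over $|X| = n(M-1)^{n-1}$.
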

\begin{proof}
We construct the same bipartite graph as in Theorem~\ref{main-thm}. However, we will estimate $R(w)$ differently. As in Theorem~\ref{main-thm}, for any right-node $x$, we let $I$ denote the set of entries $i$ with $x(i) = 1$. As before, $x$ has at most $\max(2,|I|)$ neighbors; also, since $I \subseteq e$, we have $|I| \leq r$. So, in case (A2) or case (A3), we have $R(w) \geq \min(1, \frac{2}{r})$. By our assumption that $r \geq 2$, this implies that $R(w) \geq \frac{2}{r}$.

So, summing over $w$:
\begin{align*}
\sum_{w \in X} R(w) &\geq \frac{2}{r} |X| = (2/r) n (M-1)^{n-1} 
\end{align*}

\end{proof}

\section{Graph transformations}
\label{graph-transform-sec}
In this section, we describe certain graph transformations which allow us to reduce the calculation of $Z_1(H,M,f)$ to the behavior of smaller subgraphs. These transformations do not allow us to compute $Z_1(H,M,f)$ in full generality, but they can show certain restrictions on minimal counter-examples to Conjecture~\ref{big-conj2}.

For any hypergraph $H$ and vertex $v \in [n]$, we define by $H-v$ the subgraph induced on the vertices $[n] - \{ v \}$.
\begin{proposition}
\label{induct-prop1}
Suppose $H$ has a vertex $v$ of degree zero. Then
$$
|Z_1(H,M,f)| \geq M |Z_1(H-v, M, f)| + \sum_{j = 2}^M |Z_j(H-v,M,f)|
$$
\end{proposition}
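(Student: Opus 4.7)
The plan is to prove this directly by a counting argument that splits on the value $w(v)$; in fact the bound will hold with equality.

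Since $v$ has degree zero, no edge of $H$ contains $v$, so the identification $H\cong H-v$ on the edge set means $fw(e)=fw'(e)$ for every edge $e$, where $w'=w|_{[n]\setminus\{v\}}$. Consequently $w$ is isolating for $H$ if and only if $w'$ is isolating for $H-v$. Moreover $L(w)=\min(w(v),L(w'))$. So my first step is to state this reduction explicitly and use it to rewrite $Z_1(H,M,f)$ as the set of pairs $(w(v),w')$ satisfying $\min(w(v),L(w'))=1$ and $w'\in Z(H-v,M,f)$.

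Next I would partition these pairs according to whether $w(v)=1$ or $w(v)\geq 2$. In the first case the layer constraint on $w$ is automatic, so $w'$ can be any isolating weight for $H-v$, giving $|Z(H-v,M,f)|=\sum_{j=1}^M |Z_j(H-v,M,f)|$ contributions. In the second case, since $w(v)\geq 2$, the condition $L(w)=1$ forces $L(w')=1$, hence $w'\in Z_1(H-v,M,f)$, and there are $M-1$ choices for $w(v)$, giving $(M-1)|Z_1(H-v,M,f)|$ contributions. Summing these two cases yields
\[
|Z_1(H,M,f)| \;=\; \sum_{j=1}^M |Z_j(H-v,M,f)| \;+\; (M-1)|Z_1(H-v,M,f)|,
\]
which after collecting the $j=1$ term on the right equals $M|Z_1(H-v,M,f)|+\sum_{j=2}^M|Z_j(H-v,M,f)|$, as required.

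There is no real obstacle here: the only thing to verify is the basic equivalence that a weight on $H$ is isolating iff its restriction is isolating on $H-v$, which is immediate from $v$ being an isolated vertex (combined with the convention that the empty hypergraph isolates every weight, in case $H-v$ has no edges). I would also remark that the inequality in the proposition is actually an equality, which may be useful in later inductive arguments about minimal counterexamples to Conjecture~\ref{big-conj2}.
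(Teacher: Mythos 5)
Your proof is correct and is essentially the same construction as the paper's: both extend isolating weights $w'$ on $H-v$ by choosing a value $w(v)$, exploiting that $v$ lies in no edge so $L(w)=\min(w(v),L(w'))$ and isolation is unaffected. The only cosmetic difference is that you partition on $w(v)=1$ versus $w(v)\geq 2$, while the paper partitions on $L(w')=1$ versus $L(w')>1$; your version has the minor added benefit of making explicit that the bound is in fact an equality.
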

\begin{proof}
For each $w \in Z_1(H - v,M,f)$, we can extend it to $Z_1(H,M,f)$ by assigning any value to $w(v)$. Also, for each $w \in Z_j(H - v, M, f)$ for $j > 1$, we can extend it to $Z_1(H,M,f)$ by assigning $w(v) = 1$.
\end{proof}

\begin{proposition}
\label{induct-prop2}
Suppose that $v \in H$ has degree one (that is, exactly one edge of $H$ contains $v$). Then
$$
|Z_1(H,M,f)| \geq (M-1) |Z_1(H-v,M,f)| + (M-1)^{n-1}
$$
\end{proposition}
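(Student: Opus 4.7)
Let $e^*$ denote the unique edge of $H$ containing $v$. My plan is to build an injection from the disjoint union $(Z_1(H-v,M,f)\times\{2,\dots,M\})\sqcup \{2,\dots,M\}^{[n]-\{v\}}$ into $Z_1(H,M,f)$; since these have sizes $(M-1)|Z_1(H-v,M,f)|$ and $(M-1)^{n-1}$ respectively, the bound follows.

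For a pair $(w',c)$ from the first component, I extend $w'$ to $\tilde w$ on $[n]$ by $\tilde w(v)=c$. Letting $e'$ be the isolated edge of $w'$ in $H-v$, the only edge of $H$ absent from $H-v$ is $e^*$, so a direct computation shows that $\tilde w$ is isolating in $H$ (with isolated edge whichever of $e', e^*$ has smaller $f\tilde w$-weight) except when $f(c)=fw'(e')-\sum_{u\in e^*-v} f(w'(u))$. Since $f$ is strictly increasing, this tie condition fails for at most one $c\in[M]$ per $w'$. In the non-tied case the map is simply $(w',c)\mapsto\tilde w$; in the tied case I would modify $\tilde w$ via Proposition~\ref{prop1}, subtracting $\chi_{e^*}$ or $\chi_{e'}$ according to which is admissible given the location of the weight-$1$ vertex of $w'$ (with a separate argument when that vertex sits in $e^*\cap e'$).

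For the second component, given $w'\in\{2,\dots,M\}^{[n]-\{v\}}$, I extend to $\tilde w$ with $\tilde w(v)=1$. Since $\tilde w(u)\geq 2$ for all $u\neq v$, Proposition~\ref{prop1a} with $\ell=v$ applies whenever $e^*$ is the unique min-weight edge of $\tilde w$, yielding the isolating weight $\tilde w-\chi_{e^*-\{v\}}$; otherwise some min-weight edge $e\neq e^*$ exists, satisfies $v\notin e$ and $\tilde w(i)\geq 2$ for $i\in e$, so Proposition~\ref{prop1} produces the isolating weight $\tilde w-\chi_e$. The two image sets are disjoint because images from the first component have $v$-coordinate in $\{2,\dots,M\}$ while images from the second retain $v$-coordinate $1$ (neither $\chi_{e^*-v}$ nor $\chi_e$ with $e\neq e^*$ touches $v$). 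The main obstacle is verifying injectivity of both maps in the presence of these case-dependent modifications, and in particular ruling out that a modified tied image from the first map coincides with an unmodified image in either map; the delicate step is the case analysis controlling where the weight-$1$ vertex of $w'$ lies relative to $e^*$ and $e'$.
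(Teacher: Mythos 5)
Your high-level plan (two disjoint families of isolating weights, one of size $(M-1)|Z_1(H-v,M,f)|$ coming from extensions of isolating weights of $H-v$ and one of size $(M-1)^{n-1}$ coming from weights with $w(v)=1$) matches the paper. Your second component is essentially the paper's $A_2$, except the paper simply maps $w'$ to $\tilde w$ when $e^*$ is already the unique min-weight edge, which avoids the extra bookkeeping your $\tilde w - \chi_{e^*-\{v\}}$ choice would force on you. The genuine gap is in your handling of the tied case in the first component.

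You cannot in general repair the tie by subtracting $\chi_{e^*}$ or $\chi_{e'}$ from $\tilde w$. Since $w'\in Z_1(H-v,M,f)$, it can have \emph{several} weight-$1$ vertices, and these may land simultaneously in $e^*-\{v\}$ and in $e'$. For a concrete example, take $H$ with edges $e^*=\{v,a,b\}$ and $e'=\{a,c\}$, $f$ the identity, $w'(a)=w'(b)=1$, $w'(c)=3$, and $c=\tilde w(v)=2$: then $f\tilde w(e^*)=f\tilde w(e')=4$ is a tie, but $b\in e^*-\{v\}$ and $a\in e'$ both carry weight $1$, so neither subtraction stays in $[M]^n$, and Proposition~\ref{prop1a} is also unavailable since there is no single $\ell$ with $\tilde w(i)\geq 2$ for all $i\neq\ell$. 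Moreover, even when a subtraction is admissible, using $\chi_{e^*}$ with $c=2$ drops the $v$-coordinate to $1$, which breaks the disjointness argument you give (it rests on images from the first component having $v$-coordinate in $\{2,\dots,M\}$). The paper avoids all of this: it notes that for each $w'$ there is \emph{at most one} bad value $c_0$ of $w(v)$ (the one creating the tie), and simply places into $A_1$ the $M-1$ extensions with $w(v)\neq c_0$ when $c_0$ exists, or the $M-1$ extensions with $w(v)\in\{2,\dots,M\}$ otherwise. Every member of $A_1$ is then a genuine, unmodified extension of $w'$, so $|A_1|=(M-1)|Z_1(H-v,M,f)|$ and the restriction map to $[n]-\{v\}$ makes both injectivity and disjointness from $A_2$ immediate. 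Your construction as written does not close these holes, and the ``delicate case analysis'' you defer is in fact where the argument fails.
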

\begin{proof}
Suppose without loss of generality that $v = 1$ and that $\tilde e$ is the sole edge containing $v$.

We will construct two classes of isolating weights for $H$. To construct the first class $A_1$, begin with some $w' \in Z_1(H-v,M,f)$. Extend this to $w \in [M]^n$ by assigning some value to $w(1)$. Observe that $w$ will fail to be isolating if and only if the unique  min-weight edge of $H-v$ has the same value as $\tilde e$. Thus, there is at most one value of $w(1)$ such that $w \notin Z_1(H,M,f)$.

First, suppose that there is some choice of $w(1)$ such that $w \notin Z_1(H,M,f)$. In this case, $w'$ extends to $w$ in $M-1$ ways, which are all placed into $A_1$. 

Second, suppose that $w \in Z_1(H,M,f)$ for all $M$ choices of $w(1)$. In this case, we extend $w'$ to $Z_1(H,M,f)$ by assigning values $w(1) = 2, \dots, n$ and placing these into $A_1$. Even though assigning $w(1) = 1$ would also lead to a isolating weight, we do \emph{not} place this into $A_1$.

Thus, each $w' \in Z_1(H-v,M,f)$ corresponds to exactly $M-1$ elements in $A_1$, so that $|A_1| = (M - 1) |Z_1(H-v,M,f)|$

We construct the next class $A_2$ as the image of an injective function $\Psi: [M-1]^{n-1} \rightarrow Z_1(H,M,f)$, as follows. Given $w: \{2, \dots, n \} \rightarrow \{ 2, \dots, M \}$, extend it to $[M]^n$ by assigning $w(1) = 1$. If $\tilde e$ is the unique min-weight edge for $w$, then set $\Psi(w) = w$. Otherwise, let $e \in H - v$ be a min-weight edge for $w$, and let $\Psi(w) = w - \chi_{e}$; by Proposition~\ref{prop1} we have $\Psi(w) \in Z_1(H,M,f)$.

We first claim that $\Psi$ is injective. For, given $w \in \text{image}(\Psi)$, let $e$ denote its unique min-weight edge. If $e = \tilde e$, then $\Psi^{-1}(w) = w$; otherwise $\Psi^{-1}(w) = w + \chi_{e}$.

Next, we claim that $A_2$ is disjoint from $A_1$. For, suppose that $w \in \text{image}(\Psi)$ and $e$ is its unique min-weight edge. As $w \in A_2$ we have $w(1) = 1$.

If $e = \tilde e$, then $w(2) > 1, \dots, w(n) > 1$, so that $\langle w(2), \dots, w(n) \rangle \notin Z_1(H-v, M, f)$. But, for all $x \in A_1$ we have $\langle x(2), \dots, x(n) \rangle \in Z_1(H - v,M,f)$. 

If $e \neq \tilde e$, then observe that $e$ will remain the unique min-weight edge even if we increment $w(1)$ from its initial value of $1$ to an arbitrary value. Thus, even if we had started with $\langle w(2), \dots, w(n) \rangle \in Z_1(H - v,M,f)$ to construct an element of $A_1$, we would not have been allowed to assign $w(1) = 1$. Thus, $w \notin A_1$.

Thus, we see that $|A_2| = (M-1)^{n-1}$ and $A_2$ is disjoint from $A_1$. So $|Z_1(H,M,f)| \geq |A_1| + |A_2| = (M-1) |Z_1(H-v,M,f)| + (M-1)^{n-1}$.
\end{proof}

\begin{corollary}
Suppose that $H$ is $1$-degenerate. Then $|Z_1(H,M,f)| \geq n (M-1)^{n-1}$
\end{corollary}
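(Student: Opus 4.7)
The natural approach is induction on $n$, combining the reduction Propositions \ref{induct-prop1} and \ref{induct-prop2} with the translation-invariance Proposition \ref{prop2}. The base case $n \leq 1$ holds trivially. For the inductive step, note that $1$-degeneracy is hereditary, so $H$ contains a vertex $v$ of degree $0$ or $1$, and $H - v$ is again $1$-degenerate on $n-1$ vertices; the two subcases are handled separately.

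The degree-$1$ case is immediate: if $\deg v = 1$, Proposition \ref{induct-prop2} combined with the inductive hypothesis yields
\[
|Z_1(H,M,f)| \;\geq\; (M-1)\cdot (n-1)(M-1)^{n-2} + (M-1)^{n-1} \;=\; n(M-1)^{n-1},
\]
matching the claim with equality. The real work lies in the degree-$0$ case, where Proposition \ref{induct-prop1} only gives
\[
|Z_1(H,M,f)| \;\geq\; M\,|Z_1(H-v,M,f)| + \sum_{j=2}^{M} |Z_j(H-v,M,f)|,
\]
so one must control every layer, not just $Z_1$. For each $j \geq 2$ I would invoke the bijection constructed inside the proof of Proposition \ref{prop2} (shifting $f$ down by $j-1$): it shows $|Z_j(H-v,M,f)| = |Z_1(H-v,M-j+1,g_j)|$ for an explicit $g_j$. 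Since $H - v$ is still $1$-degenerate, the inductive hypothesis applies to give $|Z_j(H-v,M,f)| \geq (n-1)(M-j)^{n-2}$.

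Substituting these estimates, using the identity $M(M-1)^{n-2} = (M-1)^{n-1} + (M-1)^{n-2}$, and changing variables $i = M - j$ reduces everything to the inequality
\[
(n-1) \sum_{i=1}^{M-1} i^{n-2} \;\geq\; (M-1)^{n-1},
\]
which follows from the Riemann-sum estimate $\sum_{i=1}^{M-1} i^{n-2} \geq \int_{0}^{M-1} x^{n-2}\,dx = (M-1)^{n-1}/(n-1)$, valid because $x^{n-2}$ is nondecreasing on $[0,\infty)$ for $n \geq 2$. The main obstacle is precisely this degree-$0$ step: Proposition \ref{induct-prop2} was tailored to give the exact constant in the degree-$1$ case, whereas the degree-$0$ case splits into contributions from \emph{all} $M$ layers and requires the integral comparison above to reassemble the sharp leading coefficient $n$.
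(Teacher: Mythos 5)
Your proof is correct and takes essentially the same approach as the paper: induction on the number of vertices, removing a vertex of degree $0$ or $1$ (which exists by $1$-degeneracy, and which is hereditary) and applying Propositions~\ref{induct-prop1} and~\ref{induct-prop2}. The paper's own proof is a one-line statement of this scheme; your write-up usefully fills in the degree-$0$ case, which the paper glosses over but genuinely requires work --- one must strengthen the inductive hypothesis to all values of $M$, convert each higher layer $Z_j(H-v,M,f)$ to a $Z_1$ count at parameter $M-j+1$ via the bijections inside Proposition~\ref{prop2}, and then close with the Riemann-sum estimate $(n-1)\sum_{i=1}^{M-1} i^{n-2} \geq (M-1)^{n-1}$.
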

\begin{proof}
Let $v_1, \dots, v_n$ be an ordering of the vertices such that $v_i$ has degree $\leq 1$ in $H[v_i, \dots, v_n]$. By induction for $i = n, \dots, 1$, observe that $|Z_1(H[v_1, \dots, v_n])| \geq (n-i) (M-1)^{n-i-1}$; the inductive step follows from Propositions~\ref{induct-prop1}, \ref{induct-prop2}.
\end{proof}

\begin{proposition}
Suppose that $H_1$ is a hypergraph on vertex set $V_1$ and $H_2$ is a hypergraph on vertex set $V_2$, where $V_1, V_2$ are disjoint.
Then
$$
|Z_1(H_1 \sqcup H_2,M,f)| \geq (M-1)^{|V_2|} |Z_1(H_1, M,f)| + (M-1)^{|V_1|} |Z_1(H_2, M, f)|
$$
\end{proposition}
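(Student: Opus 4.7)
The plan is to mimic the injection idea of Proposition~\ref{prop1}, split into two symmetric halves corresponding to the two components. Let $T_1 = Z_1(H_1,M,f) \times \{2,\ldots,M\}^{V_2}$ and $T_2 = \{2,\ldots,M\}^{V_1} \times Z_1(H_2,M,f)$, and identify each pair with the corresponding weight on $V_1 \cup V_2$. Then $|T_1| = (M-1)^{|V_2|}|Z_1(H_1,M,f)|$ and $|T_2| = (M-1)^{|V_1|}|Z_1(H_2,M,f)|$, so it suffices to build maps $\Psi_i : T_i \to Z_1(H_1 \sqcup H_2, M, f)$ that are each injective and whose images are disjoint.

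The map $\Psi_1$ acts on $(w_1,w_2) \in T_1$ as follows. Form $w = (w_1,w_2)$ and let $e_1^*$ be the unique min-weight edge of $H_1$ under $w_1$. In case (i), when $fw(e_1^*) < fw(e_2)$ for every $e_2 \in H_2$, set $\Psi_1(w_1,w_2) = w$; this is isolating for $H_1 \sqcup H_2$ with isolated edge $e_1^* \in H_1$. In case (ii), otherwise, pick a canonical min-weight $H_2$-edge $e_2^*$ of $w$ (say the lexicographically smallest) and set $\Psi_1(w_1,w_2) = w - \chi_{e_2^*}$. Because $w_2 \in \{2,\ldots,M\}^{V_2}$, Proposition~\ref{prop1} applied to $H_2$ makes $e_2^*$ the unique min-weight $H_2$-edge under $w - \chi_{e_2^*}$; moreover, for any $e_1 \in H_1$ the chain $f(w-\chi_{e_2^*})(e_1) = fw(e_1) \geq fw(e_1^*) \geq fw(e_2^*) > f(w-\chi_{e_2^*})(e_2^*)$ shows that $e_2^*$ is globally isolated. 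The map $\Psi_2$ is defined symmetrically, modifying only the $V_1$-part of inputs from $T_2$.

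Injectivity of $\Psi_1$ follows by decoding the output: $z|_{V_1}$ recovers $w_1$ directly (modifications are confined to $V_2$), while the location of the isolated edge $e^*$ of $z$ distinguishes the two cases---$e^* \in H_1$ signals case (i) with $w_2 = z|_{V_2}$, and $e^* \in H_2$ signals case (ii) with $e^* = e_2^*$ and $w_2 = z|_{V_2} + \chi_{e^*}$. Similarly $\Psi_2$ is injective. For disjointness, note that any $z \in \mathrm{image}(\Psi_1)$ satisfies $z|_{V_1} = w_1 \in Z_1(H_1,M,f)$ and any $z \in \mathrm{image}(\Psi_2)$ satisfies $z|_{V_2} = w_2 \in Z_1(H_2,M,f)$, since neither map alters the corresponding half. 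If $z$ lay in both images, its isolated edge would be in $H_1$ or $H_2$; in the former case $\Psi_1$ took case (i) (forcing $z|_{V_2} \in \{2,\ldots,M\}^{V_2}$) while $\Psi_2$ took case (ii) (forcing $z|_{V_2} \in Z_1(H_2,M,f)$, which has $L(z|_{V_2}) = 1$), a contradiction with all $V_2$-values being $\geq 2$; the case $e^* \in H_2$ is symmetric. Summing $|T_1| + |T_2|$ then yields the claimed lower bound.

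The main obstacle is keeping the four case combinations between $\Psi_1$ and $\Psi_2$ organized; the location of the isolated edge of the output is the one invariant that simultaneously enables internal injectivity and mutual disjointness, and verifying that the canonical choice of $e_2^*$ (and of $e_1^*$ for $\Psi_2$) aligns with the isolated edge of the output is what makes the decoding step well-defined.
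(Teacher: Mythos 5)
Your proof is correct and takes essentially the same approach as the paper: split the target into two injections, one with domain $Z_1(H_1,M,f)\times\{2,\ldots,M\}^{V_2}$ and the other symmetric, subtract the indicator of a min-weight edge on the ``wrong'' side when needed, decode by looking at where the isolated edge lives, and use the $L=1$ versus all-$\geq 2$ dichotomy on the modified half to show disjointness. The only cosmetic difference is that you phrase the case split as a comparison $fw(e_1^*) < fw(e_2)$ rather than the paper's ``some min-weight edge lies in $H_2$,'' but these are equivalent conditions.
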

\begin{proof}
Let $n_1 = |V_1|, n_2 = |V_2|, n = n_1 + n_2$. Suppose without loss of generality that $V_1 = \{1, \dots, n_1 \}$ and $V_2 = \{n_1 + 1, \dots, n \}$ and let $H = H_1 \sqcup H_2$.
We will construct two classes of isolating weights for $H$. 

The first class is constructed as the image of an injective function $\Psi_1: Z_1(H_1, M, f) \times \{2, \dots, M \}^{n_2} \rightarrow Z_1(H,M,f)$ as follows. Given $u \in Z_1(H_1, M, f)$ and $v \in [M-1]^{n_2}$, define $w \in [M]^n$ by $w = \langle u(1), \dots, u(n_1), v(1), \dots, v(n_2) \rangle$. Suppose that $w$ has some min-weight edge $e \in H_2$; in this case, define $\Psi_1(u, v) = w - \chi_e$. If $w$ has no min-weight edges from $H_2$, then as $u$ is isolating for $H_1$, necessarily $w$ is isolating for $H$, and we define $\Psi_1(u,v) = w$.

We claim that $\Psi_1$ is injective. For, given $w = \langle u(1), \dots, u(n_1), v(1), \dots, v(n_2) \rangle \in \text{image}(\Psi_1)$, let $e$ be its unique min-weight edge. If $e \in H_2$, then $\Psi_1^{-1}(u,v) = (u, v + \chi_e)$; otherwise, if $e \in H_1$, then $\Psi_1^{-1}(u,v) = (u,v)$.

We define $\Psi_2: \{2, \dots, M \}^{n_1} \times Z_1(H_2, M, f) \rightarrow Z_1(H,M,f)$ in the same fashion, interchanging the roles of $H_1$ and $H_2$. 

We now claim that the images of $\Psi_1$ and $\Psi_2$ are disjoint. For, suppose that $(u,v)$ is simultaneously in the image of $\Psi_1$ and $\Psi_2$. Let $e_2$ be its unique min-weight edge in $H$; suppose without loss of generality that $e_2 \in H_2$. 

So $\Psi_1^{-1}(u,v) = (u, v + \chi_{e_2})$. In particular, $u \in Z_1(H_1,M,f)$ so $L(u) = 1$. Also, we have $\Psi_2^{-1}(u,v) = (u,v)$. In particular, $u \in \{2, \dots, M \}^{n_1}$ so $L(u) > 1$. This is a contradiction.

Thus, the images of $\Psi_1$ and $\Psi_2$ are disjoint so
\begin{align*}
|Z_1(H, M,f)| &\geq | \text{image} (\Psi_1) | + | \text{image}(\Psi_2) | = |Z_1(H_1, M,f)| (M-1)^{n_2} + |Z_1(H_2, M, f)| (M-1)^{n_1}
\end{align*}
\end{proof}

\begin{corollary}
Suppose that $H$ is a counter-example to Conjecture~\ref{big-conj2}, and among all such counter-examples it minimizes the number of vertices $n$. Then $H$ is connected and all the vertices of $H$ have degree strictly greater than $1$.
\end{corollary}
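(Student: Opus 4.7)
My plan is to proceed by contradiction using the three propositions just proved in this section. Suppose $H$ is a minimum counterexample on $n$ vertices, so for some $M$ and $f$ we have $|Z_1(H,M,f)| < n(M-1)^{n-1}$; by minimality of $n$, every hypergraph $H'$ on $n' < n$ vertices satisfies $|Z_1(H',M',f')| \geq n'(M'-1)^{n'-1}$ for \emph{every} $M'$ and $f'$. I will assume additionally that $H$ is disconnected or contains a vertex $v$ of degree at most $1$, and in each case deduce $|Z_1(H,M,f)| \geq n(M-1)^{n-1}$, contradicting our hypothesis on $H$.

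Suppose first that $H$ decomposes as a disjoint union $H_1 \sqcup H_2$ on nonempty vertex sets of sizes $n_1, n_2$ with $n_1 + n_2 = n$. The disjoint-union proposition, combined with the induction hypothesis applied to each of $H_1$ and $H_2$, gives
\begin{align*}
|Z_1(H,M,f)| &\geq (M-1)^{n_2}|Z_1(H_1,M,f)| + (M-1)^{n_1}|Z_1(H_2,M,f)| \\
&\geq (M-1)^{n_2}\cdot n_1 (M-1)^{n_1-1} + (M-1)^{n_1}\cdot n_2 (M-1)^{n_2-1} = n(M-1)^{n-1}.
\end{align*}
This simultaneously rules out disconnected $H$ and the existence of an isolated vertex $v$: in the latter case take $H_1 = (\{v\},\emptyset)$ and $H_2 = H - v$, noting $|Z_1(H_1,M,f)| = 1$ by the empty-hypergraph convention, and the same calculation still goes through.

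Suppose instead that $H$ has a vertex $v$ of degree exactly $1$. Then Proposition~\ref{induct-prop2} and the induction hypothesis applied to $H - v$ yield
\begin{align*}
|Z_1(H,M,f)| &\geq (M-1)|Z_1(H-v,M,f)| + (M-1)^{n-1} \\
&\geq (M-1)\cdot (n-1)(M-1)^{n-2} + (M-1)^{n-1} = n(M-1)^{n-1},
\end{align*}
again the desired contradiction. I do not expect any real obstacle here: the corollary is essentially a bookkeeping consequence of the three preceding propositions, and the only mild subtlety is that the induction hypothesis must be invoked with the specific $M, f$ witnessing the failure of $H$, which is legitimate because Conjecture~\ref{big-conj2} is a uniform statement over all $M$ and $f$.
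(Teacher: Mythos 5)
Your argument is correct and is exactly the bookkeeping deduction the paper intends from the three preceding propositions (the paper leaves the corollary without an explicit proof). One remark: you handle the degree-zero case by applying the disjoint-union proposition with $H_1 = (\{v\},\emptyset)$ rather than invoking Proposition~\ref{induct-prop1} directly; this is a legitimate and in fact cleaner route, since using Proposition~\ref{induct-prop1} alone would require the extra step of bounding $\sum_{j\ge 2}|Z_j(H-v,M,f)|$ via Proposition~\ref{prop2} (the crude bound $|Z_1(H,M,f)| \ge M|Z_1(H-v,M,f)|$ does not suffice when $M > n$). The only other point worth making explicit is the base of the minimality argument, namely that no $H$ on $n\le 1$ vertices is a counterexample, which guarantees the induction hypothesis is available for the $n_1=1$ piece you use; this is immediate but deserves a sentence.
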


\section{The case of $M = 2$}
\label{m2sec}
In this section, we will prove Conjecture~\ref{big-conj2} for $M = 2$. The basic idea of this proof is to identify a class of isolating weights which we refer to as \emph{special isolating weights.} We will show that there are at least $n (M-1)^{n-1} = n$ special isolating weights. See \cite{faber-prev} for a more detailed analysis of structural properties of the isolating weights in this case.

\begin{definition}
Suppose $H$ is a non-empty hypergraph. We say $w: [n] \rightarrow [2]$ is a \emph{special isolating weight} for $H$ if there is an edge $e$ satisfying the following conditions:
\begin{enumerate}
\item For all $i \in e, j \notin e$ we have $w(i) \leq w(j)$.
\item For all $e' \neq e, e' \in H$ we have $w(e') > w(e)$.
\end{enumerate}
\end{definition}

The objective function $f$ plays no part in this definition. We define $Z'(H)$ to be the set of special isolating weights for $H$. (If $H$ contains no edges, then we define $Z'(H) = [2]^n$). The following key result shows why special isolating weights are simpler to deal with:
\begin{proposition}
\label{reduce-prop}
Let $H$ be a hypergraph, and let $r$ denote the the minimum cardinality of the edges of $H$. Let $H_r$ denote the subgraph of $H$ consisting of the edges of cardinality exactly $r$.

Then, for any objective function $f$, we have
$$
Z'(H_r) \subseteq Z(H, 2, f)
$$
\end{proposition}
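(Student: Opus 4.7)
The plan is to fix any $w \in Z'(H_r)$ with witness edge $e \in H_r$, and show that $e$ is the unique $f$-minimum edge of $H$, that is, $fw(e') > fw(e)$ for every $e' \in H$ with $e' \neq e$. Throughout, write $a = f(1)$ and $b = f(2)$, so $0 < a < b$.

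The first step is to extract a useful dichotomy from condition~1 of the definition of a special isolating weight. Since $w: [n] \to \{1,2\}$, writing $V_j = w^{-1}(j)$, condition~1 forces either (a) $e \subseteq V_1$ or (b) $[n] \setminus e \subseteq V_2$: for if some $i \in e$ had $w(i) = 2$, then condition~1 applied with this $i$ and every $j \notin e$ would force $w(j) = 2$. In either branch one obtains the pointwise domination $f(w(i)) \leq f(w(j))$ for all $i \in e$, $j \notin e$.

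The second step is to split on the size of $e'$. Since $r$ is the minimum edge cardinality in $H$, we have $|e'| \geq r$. In the equal-size case $|e'| = r$, we have $e' \in H_r$, and condition~2 gives $w(e') > w(e)$ as integer sums; since both edges have $r$ vertices each with value in $\{1,2\}$, this means $e'$ contains strictly more weight-$2$ vertices than $e$, and hence $fw(e') - fw(e) = (\Delta k)(b-a) > 0$ where $\Delta k \geq 1$ is the excess count. In the larger case $|e'| > r$, condition~2 provides no information (since $e'$ need not lie in $H_r$), so the argument rests entirely on the pointwise domination. Writing
\[
fw(e') - fw(e) = \sum_{i \in e' \setminus e} f(w(i)) - \sum_{i \in e \setminus e'} f(w(i)),
\]
I would handle branches (a) and (b) of the dichotomy separately. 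In branch (a) every summand in the first sum is at least $a$ while every summand in the second equals $a$ (since $e \setminus e' \subseteq e \subseteq V_1$), giving a surplus of at least $(|e' \setminus e| - |e \setminus e'|)a = (|e'|-r)a > 0$. In branch (b) every summand in the first sum equals $b$ (since $e' \setminus e \subseteq [n]\setminus e \subseteq V_2$) while every summand in the second is at most $b$, giving a surplus of at least $(|e'|-r)b > 0$.

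The main obstacle is keeping the inequality strict in the $|e'| > r$ case, where condition~2 gives no leverage and the edge $e'$ can freely straddle $V_1$ and $V_2$. The dichotomy extracted from condition~1 is precisely the simplification that makes the $M=2$ setting tractable: it forces the weights on $[n] \setminus e$ to pointwise dominate the weights on $e$ in one of only two ``pure'' ways, so the $|e'|-r$ extra vertices of $e'$ lying outside $e$ always contribute the slack needed for strict positivity. For $M>2$ no such clean dichotomy is available, which is consistent with the proposition being tailored to $M=2$.
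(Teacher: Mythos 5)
Your proof is correct and follows essentially the same route as the paper: extract the dichotomy $e \subseteq w^{-1}(1)$ or $[n]\setminus e \subseteq w^{-1}(2)$ from condition~1, then argue separately for $|e'| = r$ (via condition~2) and $|e'| > r$ (via pointwise domination), with the paper treating only the first branch and invoking symmetry for the second. Your slightly different phrasing of the $|e'|=r$ case (counting the excess of weight-$2$ vertices, $\Delta k (b-a) > 0$, which works uniformly across both branches) versus the paper's (noting $e'$ must contain a weight-$2$ vertex under branch~(a)) is a cosmetic distinction.
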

\begin{proof}
Let $w \in Z'(H_r)$, with min-weight edge $e$. We will show that $e$ remains the unique min-weight edge for $fw$ in $H$. 

It is either the case that $w(i) = 1$ for all $i \in e$, or $w(i) = 2$ for all $i \notin e$.  The proofs are similar so we only deal with the first case.

First, consider some other edge $e'$ of cardinality $r$. Then by hypothesis $e'$ contains a point $i$ with $w(i) = 2$ so that $fw(e') \geq f(2) + (r-1) f(1) > r f(1) = fw(e)$. 

Next, suppose $e'$ has cardinality strictly greater than $r$. Then
\begin{align*}
fw(e') - fw(e) &= \sum_{i \in e' - e} fw(i) - \sum_{i \in e - e'} fw(i)  \geq \sum_{i \in e' - e} f(1) - \sum_{i \in e - e'} f(1) = f(1) ( |e'| - |e| ) > 0
\end{align*}
\end{proof}

\begin{proposition}
\label{m2prop1}
Suppose every edge of $H$ has cardinality exactly $r$. Then $|Z'(H)| \geq n$.
\end{proposition}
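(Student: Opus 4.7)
The plan is to reformulate $Z'(H)$ set-theoretically and then prove the bound by induction on $n$. First, identify each weight $w \in \{1,2\}^n$ with its weight-$1$ set $S = w^{-1}(1) \subseteq [n]$. Unpacking the definition, condition~(1) (separation) forces either $e \subseteq S$ or $S \subseteq e$, since the values of $w$ lie in $\{1,2\}$. Since $H$ is $r$-uniform, $w(e) = 2r - |S \cap e|$, so condition~(2) becomes $|S \cap e| > |S \cap e'|$ for all other $e' \in H$. Combining the two, $w$ is special isolating iff either (A) $S$ contains exactly one edge of $H$, or (B) $S$ is contained in exactly one edge of $H$. Each edge $e \in H$, viewed as $S = e$, satisfies both conditions, so $|H|$ sets contribute to $|Z'(H)|$ immediately.

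Next, I induct on $n$. The base $n = 1$ is immediate. For $n \geq 2$, first treat the case where some vertex $v \in [n]$ has degree $0$ in $H$. Partitioning subsets of $[n]$ by whether they contain $v$ gives
\[
|Z'(H)| \;=\; |Z'(H-v)| \;+\; \#\bigl\{T \subseteq [n]\setminus\{v\} : T \text{ contains exactly one edge of } H-v\bigr\},
\]
because no edge contains $v$, so condition (B) fails for any $S \ni v$, while condition (A) depends only on $S \setminus \{v\}$. The first summand is at least $n-1$ by induction, and the second is at least $|H-v| = |H|$ (each edge $e$ contributes $T = e$), so $|Z'(H)| \geq n$ provided $H$ is non-empty; the empty case is trivial since $|Z'(H)| = 2^n \geq n$.

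This reduces the problem to the case where every vertex has degree at least one. For $r = 1$ this means $H = \{\{v\} : v \in [n]\}$ has $|H| = n$ edges in $Z'(H)$, done. For $r \geq 2$, in addition to the $|H|$ edge-sets we count $(r-1)$-subsets satisfying condition (B): writing $L(T) := \{v : T \cup \{v\} \in H\}$ for the link of $T$, these are the $T$ with $|L(T)| = 1$. Letting $\lambda_1$ denote their number, $|Z'(H)| \geq |H| + \lambda_1$, so it suffices to establish the combinatorial inequality $|H| + \lambda_1 \geq n$ under the no-isolated-vertex hypothesis. I expect this to be the main obstacle, since the naive double-count $\sum_T |L(T)| = r|H|$ is not tight enough on its own. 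The cleanest route, following the structural analysis in \cite{faber-prev}, is to injectively assign each vertex $v \in [n]$ either to an edge $e(v) \in H$ with $v \in e(v)$, or to a distinguished $(r-1)$-subset $T(v)$ with $L(T(v)) = \{v\}$, the existence of such an assignment being guaranteed by a Hall-type matching argument that exploits the fact that every vertex has positive degree.
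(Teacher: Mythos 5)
Your reformulation of $Z'(H)$ is correct and clean: for an $r$-uniform $H$, identifying $w$ with its weight-$1$ set $S$, condition (1) forces $e\subseteq S$ or $S\subseteq e$, and condition (2) becomes $|S\cap e|>|S\cap e'|$ for $e'\neq e$, so $w\in Z'(H)$ iff $S$ contains exactly one edge of $H$ or $S$ is contained in exactly one edge of $H$ (with overlap precisely the edge-sets $S\in H$). The degree-$0$ reduction is also fine. However, the heart of the matter — the inequality $|H|+\lambda_1\geq n$ for $r$-uniform $H$ with all degrees at least one — is precisely the thing that needs proof, and you explicitly do not prove it; you only conjecture that a ``Hall-type matching argument'' should give an injective assignment of vertices to edges or to $(r-1)$-sets with unit link. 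That sketch is not a proof, and more importantly it is circular as described: the Hall condition for the bipartite graph you set up, applied to $A=[n]$, is \emph{exactly} the inequality $|H|+\lambda_1\geq n$ you are trying to establish. You would need to verify Hall for all $A\subseteq[n]$ by some independent means, and that is where all the difficulty lives (for $r=2$ it follows from a degree-sum bound, but for general $r$ it requires a genuine structural argument). Moreover, your lower bound $|Z'(H)|\geq|H|+\lambda_1$ discards contributions from subsets of sizes other than $r$ and $r-1$, which are essential when $|H|$ is small (e.g.\ a single edge gives $|Z'(H)|=2^r+2^{n-r}-1$ from subsets and supersets of all sizes).

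The paper's proof takes a materially different and more powerful route. For each edge $e$, it considers minimum vertex covers $C_1(e)$ of $\{e-e':e'\in H\}$ and $C_2(e)$ of $\{e'-e:e'\in H\}$, yielding the per-edge lower bound $S(e)\geq 2^{n-r-|C_2(e)|}+2^{r-|C_1(e)|}-1$ on the special isolating weights with min-weight edge $e$. This captures subsets and supersets of \emph{all} cardinalities, and the crude cover bounds $|C_i(e)|\leq m-1$ already close the cases $m\leq n-r$ and $m\geq n$. The remaining regime $n-r<m<n$ is handled by analyzing ``swaps'' $e\oplus\{u,v\}$ to show that enough edges are ``rich'' ($S(e)\geq 2$); this is the genuinely delicate step that your outline leaves entirely open. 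So while your set-theoretic translation is a nice observation, the proposal as written has a substantive gap at exactly the point where the paper does its real work.
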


\begin{proof}
We will suppose for this proof that $r \leq n/2$; when $r > n/2$, then the same argument applies, interchanging the roles of $e$ and $[n] - e$.

Suppose that $H$ has $m$ edges. For an edge $e \in H$, let $H_1(e) = \{ e - e' \mid e' \in H \}$ and let $H_2(e) = \{e' - e \mid e' \in H \}$.  Let $C_1(e)$ be a minimum vertex cover of $H_1(e)$ and $C_2(e)$ be a minimum vertex cover of $H_2(e)$. We define $S(e)$ to be the number of special isolating edges whose min-weight edge is $e$.

There are two types of special isolating weights we can form with min-weight edge $e$: we may assign $w(i) = 1$ for $i \in e$ and $w(i) = 2$ for $i \in C_2(e)$ and $w(i)$ arbitrary otherwise; or we may assign $w(i) = 2$ for $i \notin e$ and $w(i) = 1$ for $i \in C_1(e)$ and $w(i)$ arbitrary otherwise. There is an overlap between these classes if we set $w(i) = 1$ for $i \in e$ and $w(i) = 2$ for $i \notin e$. Thus (taking into account double-counting), we have $S(e) \geq 2^{n-r - |C_2(e)|} + 2^{r - |C_1(e)|} - 1$.

Now, clearly $|C_1(e)| \leq m-1$ and $|C_2(e)| \leq m - 1$ (we may select one vertex from each of the other edges). Thus, we have $S(e) \geq 2^{n-r-m+1}$ and hence $|Z'(H)| \geq m 2^{n-r-m+1}$. If $m \leq n - r$, then simple calculus shows that this is at least $2 (n - r) \geq n$, and we are done.

Also, observe that $|C_1(e)| \leq r$ and $|C_2(e)| \leq n - r$ ($e$ is a vertex cover of $H_1(e)$ and $[n] - e$ is a vertex cover of $H_2(e)$). So $S(e) \geq 1$. If $m \geq n$, then $|Z'(H)| \geq m \geq n$ and we are again done. 

So, let us suppose that $n-r < m < n$.  We would like to show that there are many edges that have the property $|C_2(e)| < n - r$ or $|C_1(e)| < r$. Such edges will have $S(e) \geq 2$. We say that such edges are \emph{rich}. If there are $a$ rich edges, then
$$
|Z'(H)| \geq 2 a + (m-a)
$$

Now consider any non-rich edge $e$,  that is $|C_2(e)| = n-r$ and $|C_1(e)| = r$. Consider any $v \in [n]-e$. Since the set $[n] - e - \{v \}$  is not a vertex cover of $H_2(e)$, it must be that $H_2(e)$ contains a singleton edge $\{v \}$. (Note that $H_2(e)$ cannot contain the edge $\emptyset$.) As every edge of $H$ has cardinality $r$, this in turn implies that $H$ contains an edge obtained by swapping a single element of $e$ with $v$, that is, an edge of the form $e \oplus \{v, u \}$ where $u \in e$.  Similarly, in order to have $|C_1(e)| = r$, then for each $v \in e$ there must be an edge of the form $e \oplus \{v, u \}$ where $u \in [n] - e$.

If all the edges are rich, then $|Z'(H)| \geq 2 m \geq 2 (n-r) \geq n$ and we are done. So fix some non-rich edge $\tilde e \in H$. For each $i \in \tilde e, j \notin \tilde e$, define the indicator variable $K_{ij}$ which is equal to one if $\tilde e \oplus \{i, j \} \in H$, and zero otherwise. We have shown that $\forall i \sum_j K_{ij} \geq 1, \forall j \sum_i K_{ij} \geq 1$.  Also, observe that $m \geq 1 + \sum_{i,j} K_{i,j}$.

Define the set $L = \{j \in [n] - \tilde e \mid \sum_i K_{ij} = 1 \}$.  For each $j \in L$, let $e_j$ be the (unique) edge  of the form $e_j = \tilde e \oplus \{i_j, j \}$ with $i_j \in \tilde e$. We have $\sum_i K_{i,j} \geq 1$ for all $j \in [n] - \tilde e$ and $\sum_i K_{i,j} \geq 2$ for $j \in [n] - \tilde e - L$. Summing over $j \in [n] - \tilde e$ gives $\sum_{i,j} K_{i,j} \geq |L| + 2 (n - r - |L|)$. As $\sum_{i,j} K_{i,j} \leq m - 1$, we have $$
|L| \geq 2 (n-r) - m + 1
$$

We next claim that for each $j \in L$ the edge $e_j$ is rich.  For, consider an edge of the form $e' = \tilde e \oplus \{i', j' \}$ with $i' \neq i_j$. So $e' \oplus e_j = \{i_j, j \} \oplus \{i', j' \}$. If $j = j'$ then there would be \emph{two} edges obtained from $\tilde e$ by swapping $j$, contradicting that $j \in L$. Thus, $j \neq j'$ and so $|e_j \oplus e'| = 4$. Thus, $e'$ is not equal to $e_j$ or a swap of $e_j$ (as in those cases we would have $|e_j \oplus e'| \leq 2$.)

So there are at least $1 + \sum_{i' \in e - \{i_j \} } \sum_{j' \notin e} K_{i' j'} \geq r$ edges which are not swaps of $e_j$. But, in order for $e_j$ to be non-rich, there must at least $n-r$ edges obtained by swaps of $e_j$. So, a necessary condition for $e_j$ to be non-rich is $m-r \geq n-r$; this contradicts our assumption that $m < n$.

Thus for each $j \in L$ there is an rich edge $e_j$. Furthermore, if $j \neq j'$ then $e_j \neq e_{j'}$, and so $a \geq |L| \geq 2 (n -r) - m + 1$. This implies $|Z'(H)| \geq 2 a + (m-a) \geq 2 (n-r) + 1 \geq n+1$.
\end{proof}

\begin{corollary}
$$
Y_1(2,n) \geq n
$$
\end{corollary}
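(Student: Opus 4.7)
The corollary is essentially a direct combination of Propositions~\ref{reduce-prop} and~\ref{m2prop1}. Given any hypergraph $H$ on $n \geq 1$ vertices and any strictly increasing $f: [2] \to \mathbf{R}_{>0}$, the goal is to show $|Z_1(H,2,f)| \geq n$. If $H$ has no edges, then every weight in $[2]^n$ other than the constant-$2$ weight is isolating by convention and has layer $1$, so $|Z_1(H,2,f)| = 2^n - 1 \geq n$.

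Otherwise, let $r$ denote the minimum edge cardinality of $H$ and let $H_r$ be the sub-hypergraph of $H$ consisting of the edges of cardinality exactly $r$. Proposition~\ref{reduce-prop} then gives $Z'(H_r) \subseteq Z(H,2,f)$, and Proposition~\ref{m2prop1} gives $|Z'(H_r)| \geq n$. The only remaining step is to confirm that these $n$ weights in fact lie in $Z_1$ rather than just in $Z$. The only weight in $[2]^n$ with $L(w) \neq 1$ is the constant-$2$ weight; since every edge of $H_r$ has cardinality $r$, this weight assigns the identical value $r f(2)$ to each edge of $H_r$, and so fails to be isolating for $H_r$ whenever $|H_r| \geq 2$. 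In this generic case $Z'(H_r) \subseteq Z_1(H,2,f)$ directly, and the bound follows.

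The one degenerate case is $|H_r| = 1$, where the constant-$2$ weight does belong to $Z'(H_r)$. Here the two overlapping families in the proof of Proposition~\ref{m2prop1} give $|Z'(H_r)| = 2^{n-r} + 2^r - 1$ exactly, and the elementary inequality $2^{n-r} + 2^r \geq n + 2$ for all $1 \leq r \leq n$ shows that after excluding the constant-$2$ weight, at least $n$ elements of $Z_1(H,2,f)$ still remain. The main (modest) obstacle in the proof is simply this technicality of reconciling $Z$ with the stronger $Z_1$; all of the substantive counting work is already encapsulated in Propositions~\ref{reduce-prop} and~\ref{m2prop1}.
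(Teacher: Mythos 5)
Your proof is correct and follows essentially the same route as the paper: combine Propositions~\ref{reduce-prop} and~\ref{m2prop1}, note that the constant-$2$ weight is not in $Z'(H_r)$ when $|H_r| \geq 2$, and handle $|H_r| = 1$ separately via the exact count $2^r + 2^{n-r} - 1$. The only cosmetic difference is that you bound the degenerate case with the direct inequality $2^r + 2^{n-r} \geq n+2$ rather than the paper's AM-GM estimate $2^r + 2^{n-r} \geq 2^{1+n/2}$; both suffice.
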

\begin{proof}
Consider any hypergraph $H$ and objective function $f$. Let $r$ denote the minimum edge size of $H$ and let $H_r$ denote the set of edges of $H$ with cardinality $r$.

By Proposition~\ref{reduce-prop} we have $Z(H,2,f) \supseteq Z'(H_r)$.

Suppose that $H_r$ contains more than one edge. By Proposition~\ref{m2prop1} we have $|Z'(H_r)| \geq n$. Also, note that $\langle 2,2, \dots, 2 \rangle \notin Z'(H_r)$ (since this weight would cause all edges of $H_r$ to be min-weight) and so $Z'(H_r) \subseteq Z_1(H,2,f)$.

Suppose that $H_r$ contains a single edge. Then one can easily see that $| Z'(H_r) | = 2^r + 2^{n-r} - 1 \geq 2^{1 + n/2} - 1$. Hence $|Z(H,2,f)| \geq 2^{1 + n/2} - 1$ and $|Z_1(H,2,f)| \geq 2^{1 + n/2} - 2$; this is at least $n$ for $n \geq 2$.
\end{proof}

\section{Linear hypergraphs}
\label{linearsec}
A \emph{linear hypergraph} $H$ is a hypergraph with the property that $| e \cap e' | \leq 1$ for any distinct edges $e, e' \in H$. An ordinary graph is a linear hypergraph. In this section, we prove that Conjecture~\ref{big-conj2} holds for linear hypergraphs.

\begin{definition}
For any edge $e \subseteq [n]$ and $i \in e$, we define the \emph{next vertex} of $e$ as follows. If there is some vertex $j \in e$ such that $j > i$, then $\text{Next}(i,e)$ is defined to be the smallest such $j$. Otherwise, if $i$ is the largest element of $e$, then we define $\text{Next}(i,e)$ to be the \emph{smallest} element of $e$.

\end{definition}

Recall that $X$ is the set of weights $w$ such that $w(i) = 1$ for exactly one $i \in [n]$.

\begin{proposition}
\label{gprop}
Suppose $w \in X$ satisfies $w(i) = 1$, and all min-weight edges under $w$ contain vertex $i$ and have cardinality strictly greater than $1$. For any $e$, define 
$$
g(w,e) = w - \chi_{ \{ \text{Next}(i,e) \}}.
$$

If any edge $e$ is a min-weight edge of $w$, then $e$ is the unique min-weight edge for $g(w, e)$. 
\end{proposition}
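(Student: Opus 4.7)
The plan is a direct computation along the lines of Propositions~\ref{prop1} and~\ref{prop1a}. Fix $e$, a min-weight edge of $w$, set $j = \text{Next}(i,e)$, and write $g = g(w,e) = w - \chi_{\{j\}}$. I would first record the basic properties of $j$: by definition of Next, $j \in e$, and since $|e| > 1$, necessarily $j \neq i$. Because $i$ is the unique vertex of $w$ with value $1$, this yields $w(j) \geq 2$, so the decrement is well-defined and strict monotonicity of $f$ gives $f(w(j)-1) < f(w(j))$. Consequently
\[
fg(e) \;=\; fw(e) + \bigl(f(w(j)-1) - f(w(j))\bigr) \;<\; fw(e).
\]

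The goal is then to show $fg(e') > fg(e)$ for every $e' \in H$ with $e' \neq e$, which I would split according to whether $e'$ is itself a min-weight edge of $w$. If $e'$ is a min-weight edge, then by hypothesis $i \in e'$. Here is where linearity enters: $i \in e \cap e'$ together with $|e \cap e'| \leq 1$ forces $e \cap e' = \{i\}$, and hence $j \notin e'$ (as $j \in e$, $j \neq i$). Thus $fg(e') = fw(e') = fw(e)$, which strictly exceeds $fg(e)$. If instead $e'$ is not a min-weight edge, then $fw(e') > fw(e)$ strictly, and both sub-cases are immediate: when $j \in e'$, the values $fg(e')$ and $fg(e)$ shift from $fw(e')$ and $fw(e)$ by the same amount, so the strict gap survives; when $j \notin e'$, we have $fg(e') = fw(e') > fw(e) > fg(e)$.

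The only real obstacle is the first sub-case above, and it is resolved entirely by linearity. In a non-linear hypergraph, two min-weight edges through $i$ could share an additional vertex $j' \neq i$, and if that $j'$ happened to equal $\text{Next}(i,e)$, then decrementing it would pull $fg(e')$ down in lockstep with $fg(e)$ and fail to isolate $e$. Linearity precisely precludes this obstruction, which is what makes the Next-based perturbation succeed.
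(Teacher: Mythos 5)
Your proof is correct and uses the same key ideas as the paper: monotonicity gives $fg(e) < fw(e)$, and linearity of $H$ is what prevents the decremented vertex $j=\text{Next}(i,e)$ from lying in any competing edge through $i$. The paper simply organizes the case split on $j\in e'$ versus $j\notin e'$ rather than on whether $e'$ is min-weight, but the two analyses are logically equivalent.
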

\begin{proof}
Let $j = \text{Next}(i,e), w' = g(w,e)$ and let $e' \in H$ be another edge. If $j \in e'$, then as $H$ is linear $i \not \in e'$, and so $fw(e') > fw(e)$. Both $e, e'$ contain vertex $j$ so $fw'(e') > fw'(e)$.

Otherwise, suppose $j \notin e'$. Then $fw'(e) < fw(e) \leq fw(e') = fw'(e')$.
\end{proof}

\begin{proposition}
\label{linear-prop}
Let $H$ be a linear hypergraph all of whose edges have cardinality at least two. Then for any objective function $f$ we have
$$
|Z_1(H,M,f)| \geq n (M-1)^{n-1}
$$
\end{proposition}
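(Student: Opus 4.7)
The plan is to imitate the bipartite-graph counting argument of Theorem~\ref{main-thm}, upgrading it via Proposition~\ref{gprop} and the linearity of $H$ so that every left-node contributes at least $1$ to the total. Since $|X| = n(M-1)^{n-1}$, it suffices to construct a bipartite graph $G$ with left-nodes $X$ and right-nodes $[M]^n$ such that every right-node $u$ with $\deg_G(u) \geq 1$ lies in $Z_1(H,M,f)$ and every left-node $w$ satisfies $R(w) := \sum_{u \sim w} 1/\deg_G(u) \geq 1$; then $|Z_1(H,M,f)| \geq \sum_{w} R(w) \geq |X|$.

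For $w \in X$ with $w(i)=1$, I would define the edges out of $w$ as follows. If some min-weight edge of $w$ avoids $i$ (Case A), let $e^*$ be the lex-smallest such and add the single edge $w \to w - \chi_{e^*}$; this is isolating by Proposition~\ref{prop1}. Otherwise every min-weight edge contains $i$ (Case B); for each such $e$ add an edge $w \to w - \chi_{\{\mathrm{Next}(i,e)\}}$, which is isolating by Proposition~\ref{gprop} (here $|e| \geq 2$ is essential), and if $w$ is itself isolating also add a self-edge $w \to w$.

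The crux is the degree bound $\deg_G(u) \leq 2$ for every right-node $u$ in the image, using linearity. Given $u$, its unique min-weight edge $e$ is fixed, so every preimage $w'$ of $u$ must reintroduce $e$ in its move. A Case-A preimage has its $1$-vertex uniquely determined as the sole element of $\{v : u(v)=1\}\setminus e$; a Case-B preimage has $1$-vertex $i'\in\{v:u(v)=1\}\cap e$ and subtracts $\chi_{\{\mathrm{Next}(i',e)\}}$; a self-preimage requires $|\{v:u(v)=1\}|=1$. Linearity of $H$ together with the cyclic action of $\mathrm{Next}$ on each edge shows that two distinct Case-B preimages arise only when $|e|=2$ and $\{v:u(v)=1\}=e$: for $|e|\geq 3$, the relation $\mathrm{Next}(\mathrm{Next}(i,e),e) = i$ needed to legitimize a second $1$-vertex fails because the $\mathrm{Next}$-cycle on $e$ has length $|e|\geq 3$.

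Granted the degree bound, $R(w)\geq 1$ follows in each case. In Case~A the sole output has degree exactly $1$, so $R(w)=1$. In Case~B with $w$ isolating, $w$ has two outputs of degree at most $2$ each, so $R(w)\geq 1$. In Case~B with $k\geq 2$ min-weight edges $e_1,\ldots,e_k$, linearity makes the outputs pairwise distinct, since $\mathrm{Next}(i,e_j)\in e_j\setminus\{i\}$ and distinct edges through $i$ are disjoint elsewhere, so $R(w)\geq k/2\geq 1$. The main obstacle will be the careful case analysis for the degree bound, especially for Case-B outputs with $|\{v:u(v)=1\}|=2$ and $|e|=2$, where both vertices of $e$ serve symmetrically as $1$-vertex in valid preimages and the bound $2$ is saturated.
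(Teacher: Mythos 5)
Your proof is correct and takes essentially the same approach as the paper: the same bipartite-graph counting over $X$, the same three-way case split on left-nodes (min-weight edge avoids $i$; all min-weight edges contain $i$ and $w$ is isolating; all contain $i$ with multiplicity), and the same degree-at-most-two bound on right-nodes to conclude $R(w) \geq 1$ for every $w \in X$. The only deviations are cosmetic---fixing the lex-smallest edge in Case~A, and adding one outgoing edge per min-weight edge in Case~B rather than exactly two---and neither changes the substance, since each right-node's unique min-weight edge pins down its preimages and your $\mathrm{Next}$-cycle observation recovers the same degree bound.
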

\begin{proof}
As in Theorem~\ref{main-thm}, we will construct a bipartite graph $G$, whose left half corresponds to $X$ and whose right half corresponds to $Z_1(H,M,f)$. Suppose $w \in X$ has $w(i) = 1$. We construct edges from $w$ according to three cases:
\begin{enumerate}
\item[(B1)] If $w$ has at least one min-weight edge $e$ with $i \notin e$, then create an edge from the left-node labeled $w$ to the right-node labeled $w - \chi_e$. As $i \notin e$, note that $w - \chi_e \in [M]^n$. 

\item[(B2)] Suppose that $i \in e$ for all min-weight edges $e$. If $w$ is already isolating for $H$ with min-weight edge $e$, then create edges from the left-node labeled $w$ to the two right-nodes labeled $w, g(w,e)$.

\item[(B3)] Otherwise, suppose that $i \in e$ for all min-weight edges $e$, and there are at least two such edges $e_1, e_2$. Then create edges from the left-node labeled $w$ to the two right-nodes $g(w,e_1), g(w,e_2)$.
\end{enumerate}

In case (B1), Proposition~\ref{prop1} ensures that the corresponding right-node is isolating. In cases (B2) and (B3), Proposition~\ref{gprop} ensures that the corresponding right-nodes are isolating.  So all the right-nodes which have at least one neighbor are isolating. We again use the identity
$$
\# \text{right-nodes $u$ with a neighbor} = \sum_{\text{edges $(w,u)$}} 1/\text{deg}(u)
$$
and for $w \in X$ we define $R(w) = \sum_{\text{edges $(w,u)$}} 1/\text{deg}(u)$.

Now consider some right-node $x$, with a unique min-weight edge $e$. We examine the potential ways in which $x$ can have a neighbor.

If there is $i \notin e$ with $x(i) = 1$ then necessarily $x$ has only a single neighbor $w = x + \chi_e$ coming from case (B1).

So, suppose that $x(i) > 1$ for all $i \notin e$. Let $I$ denote the set of entries $i \in e$ with $x(i) = 1$. Since $x$ could only have a neighbor from cases (B2) or (B3), it must be that $1 \leq |I| \leq 2$.

If $|I| = \{ i \}$, then the neighbors of $x$ could arise either when $w = x$ and case (B2) occurred or $w = x + \chi_{ \{ \text{Next}(i,e) \}}$ and (B2) or (B3) occurred.

If $|I| = \{i_1, i_2 \}$, then the only possible neighbors of $x$ are $w_1 = x + \chi_{ \{i_1 \}}$ and $w_2 = x + \chi_{ \{ i_2 \}}$.

Now, consider some left-node $w$. We see that in case (B1),  $w$ has a single neighbor $x$, which in turn has only a single neighbor $w$. So $R(w) = 1$. In case (B2) or (B3), then $w$ has two neighbors, each of which has at most $2$ neighbors, so $R(w) \geq 1$. Putting all these cases together and summing over $w$:
$$
\sum_{w \in X} R(w) \geq 1 \times |X| = n (M-1)^{n-1}
$$
\end{proof}

\begin{corollary}
Suppose that $H$ is a linear hypergraph. Then $|Z_1(H,M,f)| \geq n (M-1)^{n-1}$
\end{corollary}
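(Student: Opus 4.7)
The plan is to reduce to Proposition~\ref{linear-prop}, which already handles linear hypergraphs all of whose edges have cardinality at least~$2$; the only edges we need to account for separately are the singletons. The crucial structural point is that in an inclusion-free hypergraph, a vertex $v$ with $\{v\} \in H$ cannot belong to any other edge, so the singleton edges decouple completely from the rest of $H$.

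First I would set $S = \{v \in [n] : \{v\} \in H\}$ and let $H'$ be the subhypergraph of $H$ consisting of its edges of cardinality at least~$2$. By the inclusion-free assumption every edge of $H'$ lies in $[n] \setminus S$, so $H = H' \sqcup S_{|S|}$ is a disjoint union of hypergraphs on the disjoint vertex sets $[n] \setminus S$ and $S$. Linearity passes to $H'$, which then satisfies the hypotheses of Proposition~\ref{linear-prop} and yields $|Z_1(H',M,f)| \geq (n-|S|)(M-1)^{n-|S|-1}$, while the observation that $|Z_1(S_k,M,f)| = k(M-1)^{k-1}$ (stated just before Conjecture~\ref{big-conj2}) handles the singleton piece.

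Plugging these two bounds into the disjoint-union proposition of Section~\ref{graph-transform-sec} gives
\begin{align*}
|Z_1(H,M,f)|
&\geq (M-1)^{|S|}(n-|S|)(M-1)^{n-|S|-1} + (M-1)^{n-|S|}|S|(M-1)^{|S|-1} \\
&= n(M-1)^{n-1},
\end{align*}
which is exactly the bound claimed. The only obstacle I anticipate is the bookkeeping for the degenerate cases $S=\emptyset$, $S=[n]$, and $H' = \emptyset$ with $|S|<n$: the first two reduce directly to Proposition~\ref{linear-prop} or to the singleton calculation, and in the third case Proposition~\ref{linear-prop} gives a trivial $0$ bound, so one instead verifies by the binomial theorem that $|Z_1(H',M,f)| = M^{n-|S|} - (M-1)^{n-|S|} \geq (n-|S|)(M-1)^{n-|S|-1}$, after which the disjoint-union step goes through as before.
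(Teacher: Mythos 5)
Your proof is correct, and takes a mildly different route from the paper's. The paper argues by induction on $n$, peeling off one singleton-edge vertex at a time via Proposition~\ref{induct-prop2}, whereas you remove all singleton-edge vertices $S$ at once, split $H$ as the disjoint union $H' \sqcup S_{|S|}$, and combine Proposition~\ref{linear-prop}, the computation $|Z_1(S_k,M,f)| = k(M-1)^{k-1}$, and the disjoint-union proposition of Section~\ref{graph-transform-sec}. The two arguments are essentially equivalent --- applying Proposition~\ref{induct-prop2} when the unique edge through $v$ is $\{v\}$ is exactly the disjoint-union bound for $(H-v) \sqcup S_1$ --- so yours is a batched version of the paper's induction, trading the recursion for an explicit degenerate case $H' = \emptyset$, which you dispatch correctly via $M^k - (M-1)^k \geq k(M-1)^{k-1}$. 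One small imprecision: when $H' = \emptyset$ on a nonempty vertex set, the \emph{statement} of Proposition~\ref{linear-prop} is vacuously applicable and would already assert $(n-|S|)(M-1)^{n-|S|-1}$, not a ``trivial $0$ bound''; what actually fails is the \emph{proof} of Proposition~\ref{linear-prop} (with no edges there are no min-weight edges to pivot on), so the direct verification you supply is still the right precaution.
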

\begin{proof}
We prove this by induction on $n$.  If all the edges of $H$ have cardinality $> 1$, then this follows from Proposition~\ref{linear-prop}. Otherwise, let $\{v \}$ be a singleton edge in $H$. We may assume that $H$ contains no other edge containing $v$. Then observe that $H - v$ is a linear hypergraph on $n - 1$ vertices and by induction hypothesis $|Z_1(H-v,M,f)| \geq (n-1) (M-1)^{n-2}$. By Proposition~\ref{induct-prop2} 
$$
|Z_1(H,M,f)| \geq (M-1) |Z_1(H-v,M,f)| + (M-1)^{n-1} \geq n (M-1)^{n-1}
$$

\end{proof}

\section{Algorithmic applications and asymptotics}
\label{sec:algorithmic}
As we have discussed, the main use of the Isolation Lemma in the context of algorithms is the following: we have a hypergraph $H$ (which may not be presented explicitly), and we wish to find some $w: [n] \rightarrow [M]$ such that $w$ is isolating on $H$, where $M$ is as small as possible. Since we do not have access to $H$ in any convenient way, the usual way to find $w$ is to simply choose one from $[M]^n$ uniformly at random. When we do so, the resulting $w$ is isolating with probability $p = |Z(H, M, f)|/M^n$.

In these settings, we will typically have $n \rightarrow \infty$ and $M \geq n$, and we make the following useful estimate for $p$:
\begin{proposition}
\label{p-prop}
Let $\phi = n/M$. Define
$$
h_1(\phi) = \frac{\phi}{e^{\phi} - 1}, \qquad h_2(\phi) = \frac{2 (e^{\phi} - 1) - \phi}{e^{\phi} (e^{\phi} - 1)}.
$$

We have $p \geq h_2(\phi) - O(1/M)$. Furthermore, if Conjecture~\ref{big-conj2} holds, then $p \geq h_1(\phi) - O(1/M)$.
\end{proposition}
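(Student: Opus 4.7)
The plan is to divide the two lower bounds on $|Z(H,M,f)|$ established earlier in the paper by $M^n$, and then convert the resulting discrete sums to closed-form expressions in $\phi$ via an asymptotic argument. Under Conjecture~\ref{big-conj2}, Corollary~\ref{conjprops} gives $p \geq \frac{n \sum_{i=1}^{M-1} i^{n-1}}{M^n}$; unconditionally, the corollary of Theorem~\ref{main-thm} gives $p \geq \frac{2(M-1)^n - n\sum_{i=1}^{M-2} i^{n-1}}{M^n}$.

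The central estimate is that
$$S_M := \frac{n}{M^n}\sum_{i=1}^{M-1} i^{n-1} \;=\; \phi \sum_{k=1}^{M-1}(1-k/M)^{n-1} \;=\; h_1(\phi) + O(1/M),$$
where the substitution $k = M-i$ was used in the second equality and the implicit constant may depend on $\phi$. To prove this I would split the sum at $k_0 = \lceil \sqrt{M} \rceil$. For $k > k_0$, the crude bound $(1-k/M)^{n-1} \leq e^{-\phi k}$ (from $1+x \leq e^x$) shows the tail is exponentially small in $\sqrt{M}$. For $k \leq k_0$, Taylor expansion gives $(n-1)\log(1-k/M) = -\phi k + k/M - \phi k^2/(2M) + O(\phi k^3/M^2)$, so that $(1-k/M)^{n-1} = e^{-\phi k}(1 + O(k/M + k^2/M))$. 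Summing against $e^{-\phi k}$ and invoking the convergence of $\sum_k k^j e^{-\phi k}$ for each $j$, the cumulative error is $O(1/M)$, while the leading contribution is $\phi \sum_{k=1}^{\infty} e^{-\phi k} = \phi/(e^\phi - 1) = h_1(\phi)$.

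This directly yields the conditional bound $p \geq h_1(\phi) - O(1/M)$. For the unconditional bound, a similar expansion gives $\frac{(M-1)^n}{M^n} = (1-1/M)^n = e^{-\phi} + O(1/M)$, while separating off the largest term of $S_M$ gives $\frac{n}{M^n}\sum_{i=1}^{M-2} i^{n-1} = S_M - \phi(1-1/M)^{n-1} = h_1(\phi) - \phi e^{-\phi} + O(1/M)$. Substituting yields $p \geq 2 e^{-\phi} - h_1(\phi) + \phi e^{-\phi} + O(1/M)$, and a direct computation over the common denominator $e^{\phi}(e^\phi - 1)$ verifies the identity $2 e^{-\phi} - \phi/(e^\phi-1) + \phi e^{-\phi} = (2(e^\phi - 1) - \phi)/(e^\phi(e^\phi-1)) = h_2(\phi)$.

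The main obstacle is making the error analysis uniform in the two-parameter limit $n, M \to \infty$ with $n/M \to \phi$: one must verify that each per-term error of order $k^j/M$ integrates against the geometric weight $e^{-\phi k}$ to give a total contribution of order $1/M$ (with constant depending only on $\phi$), and that the cutoff at $k_0 = \sqrt{M}$ successfully balances the quality of the Taylor approximation against the geometric tail. Everything else---identifying the two lower bounds and verifying the algebraic identity relating $h_1$ and $h_2$---is elementary.
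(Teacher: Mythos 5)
The paper states Proposition~\ref{p-prop} without proof, so there is no in-paper argument to compare against. Your proof is correct and is the natural route: you take the lower bound $p \geq n\sum_{i=1}^{M-1} i^{n-1}/M^n$ (from Corollary~\ref{conjprops} under Conjecture~\ref{big-conj2}) and the unconditional bound $p \geq \bigl(2(M-1)^n - n\sum_{i=1}^{M-2}i^{n-1}\bigr)/M^n$ (from the corollary to Theorem~\ref{main-thm}), reindex via $i = M-k$, and compare $(1-k/M)^{n-1}$ with $e^{-\phi k}$ term by term with a cutoff to control the Taylor error against the geometric tail. The key identity $\phi\sum_{k\geq 1}e^{-\phi k}=\phi/(e^\phi-1)=h_1(\phi)$ and the algebraic check $2e^{-\phi}+\phi e^{-\phi}-h_1(\phi)=h_2(\phi)$ are both correct. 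One small point worth flagging if you were to write this up in full: near $k\approx\sqrt{M}$ the exponent correction $\phi k^2/(2M)$ is not small, so the step $e^{\delta}=1+O(\delta)$ needs the observation that $\delta$ is merely \emph{bounded} (so $|e^\delta-1|\leq|\delta|e^{|\delta|}$), not small; this is harmless since both $e^{-\phi k}$ and $(1-k/M)^{n-1}$ are already exponentially negligible there, but it should be said explicitly. As you correctly note, the implied constants depend on $\phi$; this is consistent with the paper's setting $\phi = n/M = O(1)$.
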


By contrast, using the cruder bound $|Z(H,M,f)| \geq (M-1)^n$, we would be able to show only that 
$$
p \geq h_0(\phi) - O(1/M) \qquad \text{where $h_0(\phi) = e^{-\phi}$}.
$$

In light of our analysis in terms of layers, we propose a slightly different method for selecting $w$. Instead of selecting $w$ uniformly from $[M]^n$, suppose we instead select $w$ uniformly from $[M]^n - \{2, \dots, n \}^n$. In this case, the resulting $w$ is isolating with probability
$$
q = \frac{|Z_1(H,M,f)|}{M^n - (M-1)^n }.
$$
We bound $q$ using either Theorem~\ref{main-thm} or Conjecture~\ref{big-conj2}. Note that these estimates avoid the $O(1/M)$ error term of Proposition~\ref{p-prop}. 
\begin{proposition}
Let $\phi = n/M \leq 1$. Then $q \geq h_2(\phi)$. Furthermore, if Conjecture~\ref{big-conj2} holds, then $q \geq h_1(\phi)$.
\end{proposition}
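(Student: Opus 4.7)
The plan is to substitute the explicit lower bounds established earlier in the paper (from Theorem~\ref{main-thm} unconditionally, or Conjecture~\ref{big-conj2} conditionally) into the defining ratio $q = |Z_1(H,M,f)|/(M^n - (M-1)^n)$, and then verify the resulting inequalities in the parameters $M$ and $n$ by direct calculus. Both inequalities are tight in the limit $M\to\infty$ with $\phi=n/M$ fixed (which is exactly what produces $h_1$ and $h_2$ in the first place), so one expects a monotonicity-in-$1/M$ argument to close the gap for finite $M$.

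For the conditional bound $q \geq h_1(\phi)$, inserting $|Z_1(H,M,f)| \geq n(M-1)^{n-1}$ and writing $t = 1/M$ so that $\phi = nt$, the claim rearranges (after clearing denominators and dividing through by $\phi$) to
\[
F(t) \;:=\; (1-t)^{n-1}\bigl(e^{nt} - t\bigr) \;\geq\; 1 \qquad \text{for } 0 \leq t \leq 1/n.
\]
I would verify this by a short derivative calculation: $F(0) = 1$, and after factoring out $(1-t)^{n-2}$ and collecting, the bracketed expression in $F'(t)$ simplifies via $e^{nt}(1-nt) + nt - 1 = (1-nt)(e^{nt}-1)$, yielding
\[
F'(t) \;=\; (1-t)^{n-2}(1-nt)\bigl(e^{nt}-1\bigr),
\]
which is manifestly nonnegative on $[0, 1/n]$. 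Hence $F(t) \geq 1$ throughout this range, which is precisely $q \geq h_1(\phi)$.

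For the unconditional bound $q \geq h_2(\phi)$, I plan to follow the identical template with the stronger bound from Theorem~\ref{main-thm}. After rescaling by $M^n$ and setting $t = 1/M$, the required inequality becomes
\[
G(t) \;:=\; 2(1-t)^n - 2(1-2t)^n - nt(1-2t)^{n-1} \;-\; h_2(nt)\bigl(1-(1-t)^n\bigr) \;\geq\; 0
\]
on $[0, 1/n]$. Direct computation shows $G(0) = 0$ (using $h_2(0) = 1$) and, somewhat less obviously, also $G'(0) = 0$, because Theorem~\ref{main-thm} is sharp to first order in $1/M$ rather than merely in zeroth order. The strategy is then to cross-multiply through $e^{nt}(e^{nt}-1)$ and verify that the resulting polynomial-plus-exponential expression remains nonnegative on $[0, 1/n]$, either by exhibiting a second-order factorization analogous to the $(1-nt)$ factor obtained for $F'(t)$, or by bounding $G''(t)$.

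The main obstacle is the algebraic bookkeeping for the $h_2$ case, which is considerably heavier than the $h_1$ case: the transcendental factor $h_2(nt)$ prevents the immediate cancellation seen in the $F$ computation, and the double-order vanishing at $t=0$ means that a higher-order expansion is needed to exhibit the threshold $\phi \leq 1$. Conceptually, however, the mechanism is identical to the $h_1$ argument---sharpness in the $t \to 0$ limit plus monotonicity on the compact interval $[0, 1/n]$---and no new combinatorial input beyond Section~\ref{bound-sec} is required.
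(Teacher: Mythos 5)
The paper does not actually supply a proof of this proposition, so there is no ``paper's own proof'' to compare against; I will assess your proposal on its own merits.

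Your treatment of the conditional ($h_1$) bound is correct and complete. Substituting $|Z_1(H,M,f)| \geq n(M-1)^{n-1}$ into $q$, dividing through by $M^n$, and setting $t=1/M$, $\phi=nt$ does indeed reduce the claim to $F(t) = (1-t)^{n-1}(e^{nt}-t)\geq 1$ on $[0,1/n]$, and the factorization
\[
F'(t) \;=\; (1-t)^{n-2}\bigl(1-nt\bigr)\bigl(e^{nt}-1\bigr)
\]
checks out by direct expansion: the bracketed expression $-(n-1)(e^{nt}-t)+(1-t)(ne^{nt}-1)$ collapses to $(1-nt)e^{nt}-(1-nt)$. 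With $F(0)=1$ and $F'\geq 0$ on $[0,1/n]$ (the constraint $\phi\leq 1$ giving $1-nt\geq 0$), the conditional bound follows.

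The unconditional ($h_2$) bound, however, is not actually proved; you have correctly reduced it to showing
\[
G(t) \;=\; 2(1-t)^n - 2(1-2t)^n - nt(1-2t)^{n-1} - h_2(nt)\bigl(1-(1-t)^n\bigr) \;\geq\; 0
\]
on $[0,1/n]$, and correctly observed the double vanishing $G(0)=G'(0)=0$ (with $G''(0)=n>0$), but the verification that $G\geq 0$ on the whole interval is left as a plan: you propose ``either'' to exhibit a second-order factorization or to bound $G''$, without carrying either through. This is a genuine gap, and it is the harder half: unlike the $h_1$ case, a clean single-factor structure for $G'$ is not at all apparent because of the transcendental coefficient $h_2(nt)$, and the double zero at $t=0$ means that establishing nonnegativity requires controlling the sign of $G'$ (or $G''$) throughout $[0,1/n]$, not just near the origin. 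Numerical checks support the claim, but a complete proof would need to actually carry out the cross-multiplication by $e^{nt}(e^{nt}-1)$ and exhibit the sign structure (or find a smarter decomposition, e.g.\ writing $h_2(\phi)=2e^{-\phi}-e^{-\phi}h_1(\phi)$ and bounding the two resulting ratios separately). As written, only the conditional half of the proposition is established.
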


In the limit as $\phi \rightarrow 0$, we have simpler estimates:
\begin{corollary}
We have
\begin{align*}
h_0(\phi) &= 1 - \phi + O(\phi^2) \\
h_1(\phi) &= 1 - \frac{\phi}{2} + \frac{\phi^2}{12} - O(\phi^4) \\
h_2(\phi) &= 1 - \frac{\phi}{2} - \frac{\phi^2}{12} + O(\phi^3)
\end{align*}
\end{corollary}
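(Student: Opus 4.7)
The plan is to verify each of the three asymptotic expansions by direct Taylor series manipulation around $\phi = 0$; no cleverness is needed, only careful bookkeeping. The main obstacle, such as it is, will be the series division required for $h_2$, since both its numerator and denominator vanish at $\phi = 0$.

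For $h_0(\phi) = e^{-\phi}$, I would simply cite the standard expansion $e^{-\phi} = 1 - \phi + \phi^2/2 - \phi^3/6 + \cdots$, from which $h_0(\phi) = 1 - \phi + O(\phi^2)$ is immediate.

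For $h_1(\phi) = \phi/(e^\phi - 1)$, I would recognize this as the classical generating function for the Bernoulli numbers, so that $h_1(\phi) = \sum_{k \geq 0} B_k \phi^k/k! = 1 - \phi/2 + \phi^2/12 + 0 \cdot \phi^3 - \phi^4/720 + \cdots$. The vanishing of $B_3$ (and indeed all odd $B_k$ for $k \geq 3$) is what lets us push the error term out to order $\phi^4$. Alternatively, one may simply write $e^\phi - 1 = \phi(1 + \phi/2 + \phi^2/6 + \phi^3/24 + \cdots)$, divide $\phi$ by this, and extract the first few coefficients by hand to check against the Bernoulli values.

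For $h_2(\phi)$, I would first expand numerator and denominator separately. The numerator is $2(e^\phi - 1) - \phi = \phi + \phi^2 + \phi^3/3 + \phi^4/12 + \cdots$; for the denominator I would multiply the Taylor series $e^\phi = 1 + \phi + \phi^2/2 + \phi^3/6 + \cdots$ by $e^\phi - 1 = \phi + \phi^2/2 + \phi^3/6 + \cdots$, collecting by power of $\phi$ to obtain $e^\phi(e^\phi - 1) = \phi + 3\phi^2/2 + 7\phi^3/6 + 5\phi^4/8 + \cdots$. Factoring $\phi$ from both, I reduce to
\[
h_2(\phi) = \frac{1 + \phi + \phi^2/3 + O(\phi^3)}{1 + 3\phi/2 + 7\phi^2/6 + O(\phi^3)},
\]
and perform formal series division: writing $h_2(\phi) = a_0 + a_1 \phi + a_2 \phi^2 + O(\phi^3)$ and matching coefficients in the identity (numerator) $=$ (denominator) $\cdot\, h_2$ yields $a_0 = 1$, then $a_1 = 1 - 3/2 = -1/2$, then $a_2 = 1/3 - 7/6 - 3a_1/2 = 1/3 - 7/6 + 3/4 = -1/12$. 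This gives $h_2(\phi) = 1 - \phi/2 - \phi^2/12 + O(\phi^3)$, as claimed. The only real place to slip up is in computing the $\phi^4$ coefficient of the denominator product (which is $5/8$ and not needed here) and in the sign of $a_2$; I would double-check by substituting a small numerical value of $\phi$ to confirm.
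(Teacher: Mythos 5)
Your computations are correct and are the natural way to verify these expansions; the paper states the corollary without proof, treating it as a routine Taylor-series calculation, and your work fills in precisely what is omitted. The Bernoulli-number observation for $h_1$ neatly explains why the error term can be pushed to $O(\phi^4)$, and your series-division bookkeeping for $h_2$ (including the coefficient $a_2 = 1/3 + 3/4 - 7/6 = -1/12$) checks out.
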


Thus, the estimates provided by Theorem~\ref{main-thm} and Conjecture~\ref{big-conj2} are asymptotically equivalent (up to second order) for $\phi \rightarrow 0$, and improve by a factor of roughly 2 over the estimate of \cite{noam}. From an algorithmic point of view, this means that, in order for an algorithm to achieve a given high success probability (i.e. small probability that $w$ fails to be isolating), we need roughly one less bit of accuracy in the size of the weights as compared to the estimate of \cite{noam}.

\subsection{Allowing zero-weight vertices}
In our definition of the objective function $f$, we have restricted the range of $f$ to be \emph{strictly positive} real numbers. In some algorithmic applications, zero-weight vertices have been allowed \cite{klivans-spielman}. It is natural to ask what bounds on $|Z(H,M,f)|$ can be shown when the function $f$ is allowed to take on the value zero. Let us define the quantity $Y'(M,n)$ as
$$
Y'(M,n) = \min_{H,f} |Z(H,M,f)|
$$
where $H$ ranges over all hypergraphs on $n$ vertices and $f$ ranges over all functions $f: [M] \rightarrow R_{\geq 0}$. Note that in this case, we can no longer assume without loss of generality that $H$ is inclusion-free.

In this setting, the bound of \cite{noam} is exactly tight.
\begin{proposition}
\label{zzprop}
We have 
$$
Y'(M,n) = (M-1)^n
$$
\end{proposition}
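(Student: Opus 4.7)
The plan is to prove both inequalities. For the upper bound $Y'(M,n) \leq (M-1)^n$, I would exhibit a single extremal pair $(H,f)$, and for the lower bound $Y'(M,n) \geq (M-1)^n$, I would re-prove Ta-Shma's injection in this relaxed setting, which only requires a mild extension of Proposition~\ref{prop1} to allow inclusions in $H$.

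For the extremal construction, I would take $H = \{\emptyset, \{1\}, \{2\}, \ldots, \{n\}\}$ together with any strictly increasing $f: [M] \to \mathbf R_{\geq 0}$ satisfying $f(1) = 0$ (for instance $f(k) = k-1$). Then $fw(\emptyset) = 0$ is always the minimum possible edge-weight, while $fw(\{i\}) = f(w(i)) \geq 0$ with equality exactly when $w(i) = 1$. Hence $w$ is isolating iff $\emptyset$ is the unique min-weight edge, iff $w(i) \geq 2$ for every $i$, giving $|Z(H,M,f)| = (M-1)^n$ exactly. Note that this $H$ is not inclusion-free ($\emptyset \subsetneq \{i\}$), which is precisely what allows the singletons to create ties with $\emptyset$ and suppress the count of isolating weights.

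For the lower bound I would first verify a version of Proposition~\ref{prop1} that does not require $H$ to be inclusion-free: if $w \in \{2, \ldots, M\}^n$ and $e$ is a min-weight edge under $fw$, then $w' = w - \chi_e$ is isolating with $e$ as its unique min-weight edge. When $e \not\subseteq e'$, i.e.\ $e \setminus e' \neq \emptyset$, the original proof of Proposition~\ref{prop1} applies verbatim, since it only used strict monotonicity of $f$. For the new case $e \subsetneq e'$, using $e' \cap e = e$ we get
\[
fw'(e') - fw'(e) = \sum_{i \in e' \setminus e} f(w(i)) \geq f(2) > 0,
\]
where the last inequality uses $f(2) > f(1) \geq 0$ together with each $w(i) \geq 2$ in the sum. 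With this extended proposition in hand, Ta-Shma's injection $\Psi: w \mapsto w - \chi_e$, where $e$ is any fixed min-weight edge of $w$, maps $\{2, \ldots, M\}^n$ injectively into $Z(H,M,f)$ exactly as in the paper; injectivity follows as before by reading off the unique isolated edge of $\Psi(w)$ and adding $\chi_e$ back. This gives $|Z(H,M,f)| \geq (M-1)^n$.

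The only genuinely new ingredient beyond the paper's existing machinery is handling the case $e \subsetneq e'$, which was previously ruled out by the inclusion-free assumption. Strict positivity of $f(2)$---forced automatically by $f \geq 0$ together with strict monotonicity---is precisely what is needed to close this gap, so the hard part is really just noticing where the old proof broke and checking that this one inequality repairs it.
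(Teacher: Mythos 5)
Your proof is correct and essentially matches the paper's: the lower bound extends Ta-Shma's injection by handling the new case $e \subsetneq e'$ via $f(2) > f(1) \geq 0$, and the upper bound exhibits a hypergraph containing $\emptyset$ together with the singletons and the objective function $f(k) = k-1$. The paper uses the full power set as its extremal hypergraph (a cosmetic difference, since only $\emptyset$ and the singletons matter) and stipulates choosing an inclusion-maximal min-weight edge in the injection, a precaution your verification shows is automatic for $w \in \{2, \dots, M\}^n$.
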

\begin{proof}
First, we show that for all $M, H, f$ we have $|Z(H,M,f)| \geq (M-1)^n$, via a slight modification of Proposition~\ref{prop1}. We construct an injective map $\Psi$ from $\{2, \dots, M \}^n$ to $Z(H,M,f)$, as follows. Given any  $w \in \{2, \dots, M \}^n$, we let $E_w$ denote the set of min-weight edges. Arbitrarily select some $e \in E_w$ which is \emph{inclusion-wise maximal}; that is, there is not any other $e' \in E_w$ with $e \subsetneq e'$. Then set $\Psi(w) = w - \chi_e$. One can easily verify that $\Psi(w)$ is isolating.

Next, we construct a hypergraph $H$ with $|Z(H,M,f)| \leq (M-1)^n$. Let $H$ be the full power-set of $n$ elements and define $f: \{1, \dots, M \} \rightarrow \mathbf R_{\geq 0}$ by $f(i) = i-1$. Observe that if $w(i) = 1$ for any $i \in [n]$, then $\emptyset, \{ i \}$ are both min-weight edges, and so $w$ is not isolating. So an isolating weight $w$ must have $w(i) \in \{ 2, \dots, M \}$ and so $|Z(H,M,f)| \leq (M-1)^n$.
\end{proof}

We emphasize that for most application of the isolation method, one can \emph{choose} the objective function $f$ in order to maximize $|Z(H,M,f)|$. Thus, Proposition~\ref{zzprop} shows that it is more efficient to choose the range of $f$ to be strictly positive.

\section{Further problems}
\label{sec:conclusion}
In addition to the main Conjecture, there are several other interesting questions one may ask:
\begin{enumerate}
\item Are there any simple graph parameters (such as edge cardinality, number of edges, etc.) such that $Z(H)$ or $Z_1(H)$ is significantly larger than our conjectured lower bound?
\item One may extend the type of objective functions, for example, one may allow distinct functions $f_i$ for each vertex $i$ (and so the value of an edge $e$ is $\sum_{i \in e} f_i(w_i)$). This generalization is needed in \cite{narayanan}, for instance. One may even further extend the objective function to be non-linear. Do similar bounds apply?
\item We have seen that there is a higher probability that $w$ is isolating if $w$ is forced to contain at least one entry of value $1$. Are there any other restrictions that we may place on $w$ to increase this probability (without taking advantage of knowledge of $H$)? We conjecture that this is not the case, i.e. if $X$ is any subset of $[M]^n$ then we have
$$
\min_{H,f} \frac{  |Z(H, M, f) \cap X| }{|X|} \geq \frac{ n (M-1)^{n-1} }{M^n - (M-1)^{n-1}}
$$
  
\end{enumerate}

\section{Acknowledgments}
Thanks to Noah Streib for suggestions on the $M=2$ proof strategy. Thanks to Aravind Srinivasan for some helpful discussions.

\end{document}